\documentclass{article} 

\interfootnotelinepenalty=10000

\usepackage{amssymb}
\usepackage{amsthm}
\usepackage{amsfonts}
\usepackage{amsmath}
\usepackage{color}
\usepackage{url}
\usepackage{graphicx}
\usepackage{tikz}
\usepackage{lipsum}
\usepackage{dsfont}
\usepackage{booktabs}

\usepackage{authblk}

\renewenvironment{proof}{\noindent {\bf Proof:}}{\hfill $\Box$}

\newtheorem{theorem}{Theorem}
\newtheorem{lemma}{Lemma}
\newtheorem{proposition}{Proposition}
\newtheorem{corollary}{Corollary}
\newtheorem{Algorithm}{Algorithm}

\newtheorem{definition}{Definition}
\newtheorem{assumption}{Assumption}
\newtheorem{remark}{Remark}

\newtheorem{example}{Example}
\usepackage{booktabs}
\usepackage{color}
\usepackage{bm}
\usepackage{sidecap}

\usepackage{algorithm}
\usepackage[noend]{algpseudocode}
\makeatletter
\def\BState{\State\hskip-\ALG@thistlm}
\makeatother
\usepackage{algorithmicx}

\setlength{\unitlength}{1mm}

\setlength{\unitlength}{1mm}

\usepackage{amsmath}
\usepackage{amssymb}
\usepackage{amsfonts}
\usepackage{hyperref}
\usepackage{cancel}

\definecolor{dkgreen}{rgb}{0,0.4,0}
\definecolor{dkred}{rgb}{0.8,0,0}

\linespread{0.94}
\setlength{\abovedisplayskip}{3pt}
\setlength{\belowdisplayskip}{3pt}

\usepackage[normalem]{ulem}


\usepackage{tikz}

\newcommand{\dist}{\mathrm{dist}}
\newcommand{\A}{\mathcal{A}}
\newcommand{\I}{\mathrm{I}}
\newcommand{\T}{\mathrm{T}}

\newcommand{\R}{\mathbb{R}}
\newcommand{\Cc}{\mathcal{C}}
\newcommand{\N}{\mathbb{N}}

\title{Sparse decompositions of nonlinear dynamical systems and applications to moment-sum-of-squares relaxations\thanks{This work has been supported by European Union's Horizon 2020 research and innovation programme under the Marie Sk\l{}odowska-Curie Actions, grant agreement 813211 (POEMA).}}

\begin{document}

\author[1]{Corbinian Schlosser}
\author[2,3]{Milan Korda}
\affil[1]{INRIA - Ecole Normale Supérieure - PSL Research university, Paris, France. {\tt corbinian.schlosser@inria.fr}}
\affil[2]{LAAS-CNRS, 7 avenue du colonel Roche, F-31400 Toulouse; France. {\tt korda@laas.fr}}
\affil[3]{Faculty of Electrical Engineering, Czech Technical University in Prague, Technick\'a 2, CZ-16626 Prague, Czech Republic.}

\maketitle


\begin{abstract}
In this paper, we propose a general sparse decomposition of dynamical systems provided that the vector field and constraint set possess certain sparse structures, which we call subsystems. This notion is based on causal dependence in the dynamics between the different states. This results in sparse descriptions for fundamental problems from nonlinear dynamical systems:  region of attraction, maximum positively invariant set, and global attractor. The decompositions can be paired with any method for computing (outer) approximations of these sets in order to reduce the computation to lower dimensional systems. We illustrate this by the methods from previous work based on infinite-dimensional linear programming. This exhibits one example where the curse of dimensionality is present and hence dimension reduction is crucial. In this context, for polynomial dynamics, we show that these problems admit a sparse sum-of-squares (SOS) approximation with guaranteed convergence such that the number of variables in the largest SOS multiplier is given by the dimension of the largest subsystem appearing in the decomposition. The dimension of such subsystems depends on the sparse structure of the vector field and the constraint set; if the dimension of the largest subsystem is small compared to the ambient dimension, this allows for a significant reduction in the computation time of the SOS approximations.
\end{abstract}

\textbf{Keywords:} nonlinear control, large scale optimization, 
 sparsity, decomposition, network control, sum-of-squares


\section{Introduction}
\label{sec:introduction}
Many tasks involving dynamical systems are of computationally complex nature and often not tractable in high dimensions. Among these are the computations of the region of attraction (ROA), maximum positively invariant (MPI) set, and global attractors (GA), all of which are the focus of this work. These sets are ubiquitous in the study of dynamical systems and have numerous applications. For example, the ROA is the natural object to certify which initial values will be steered to a desired configuration after a finite time $T$ while the solution trajectory satisfies the state constraints at all times. The question of which initial values will stay in the constraint set for all positive times is answered by the MPI set. The GA describes which configurations will be reached uniformly by the solutions of the dynamical system asymptotically.  Since these objects are complex in nature, their computation is a challenging task. Computational methods for the ROA have been pioneered by Zubov~\cite{Zubov} in the 1960s and have a long history, summarized in~\cite{Chesi}. A survey on the (controlled) MPI set and computational aspects can be found in~\cite{Blanchini}. Computations of the GA are typically approached via Lyapunov functions~\cite{LyapunovFunctions}, via finite-time truncation or set-oriented methods~\cite{Dellnitz}.


Given the curse of dimensionality problem present in the computation of these sets, it is important to exploit structure in order to reduce the complexity. There are several concepts used for reducing the complexity, such as leveraging symmetries (see, e.g., \cite{FantuzziGoluskin}) or the knowledge of Lyapunov or Hamilton functions (see, e.g.,~\cite{Valmorbida}). In this text, we investigate a specific type of sparsity found in dynamical systems. We consider a framework for which we proposed sparse computations of invariant measures and a sparse decomposition for the so-called dynamic mode decomposition in~\cite{SparseKoopman}.

The central concept in this text is decoupling of dynamical systems into smaller subsystems. As subsystems, we consider ensembles of states of the dynamical system that are causally independent of the remaining states. This allows to treat these ensembles of states as separate dynamical systems, provided the constraint set satisfies certain compatibility conditions. This results in computational time reduction and builds on the work~\cite{Chen}. Even though our main goal is to exploit such decoupling computationally, we study the sparse structure at a rather general level, allowing for our results to be used within other computational frameworks and for other problems than those encountered in this work. The main novelty is the following: (i)  We generalize the method of \cite{Chen} to far more general sparse structures. (ii) We treat different problems than \cite{Chen}, namely in addition to ROA treated in \cite{Chen} we also treat the computation of the MPI set and GA. (iii) We show that any method for approximating the ROA, the MPI set, and GA with certain convergence properties allows for a reduction to lower dimensional systems such that convergence is preserved. (iv) We use the proposed decoupling scheme within the moment sum-of-squares hierarchy framework, obtaining a sparse computational scheme for the ROA, MPI set and GA with a guaranteed convergence from the outside to the sets of interest; to the best of our knowledge, this is the first time sparsity is exploited in the moment-sos hierarchy for dynamical systems without compromising convergence.

To determine the subsystems we represent the interconnection between the dynamics of the states by the directed sparsity graph of the dynamics. Informally, our main result reads: The dynamical system can be decomposed into subsystems that can be easily inferred from the sparsity graph. Further, this decomposition gives rise to decompositions of the ROA, MPI set, and GA.

In relation to existing works on decomposition of subsystems, the notion introduced in this work provides an \textit{exact} decomposition, i.e., no error is induced and the whole system can be uniquely recovered from subsystems. As a consequence, any preferred decomposition method for dynamical systems can be combined with our method as a refinement by applying it to any subsystem. From the existing decomposition methods that can be combined with ours, we mention \cite{Anderson} for a survey, \cite{AlMaruf} for decompositions based on comparison systems, \cite{Elkin} for decompositions based on a different notion on subsystems or \cite{Mischaikow} for topological decomposition, to name only a few existing methods.

We only consider continuous-time dynamical systems but all the results translate to discrete-time systems as well. Indeed, both the decoupling into subsystems and of the ROA, MPI and GA as well as the specific SOS approach have discrete-time analogs.

\subsection{Applications}

Our work is based on \cite{Chen}, where a 6D Quadrotor was used to illustrate the application of sparsity to reachable set computation. Another example from robotics involving sparsity can be found in control of a monocycle robot~\cite{MonoCycle}. Additional instances of sparse systems can be found in network systems, among which are energy networks, interaction networks, hierarchical networks, citation networks, the internet, social networks~\cite{Strogatz}; or logistic networks~\cite{Bullo}, (distributed) multicellular programming~\cite{CellarProgrammingR,CellarT}, and traffic networks~\cite{TrafficCascade2,TrafficCascade1}. These examples address important and large-scale applications. With increasing size of those networks, their analysis gets increasingly more important and challenging. Thus, exploiting the structure of these networks becomes necessary. One such structure that sometimes can be found in the mentioned networks is sparse causality. Causality describes the flow of information or the dependence between different states in such networks. In the case of social networks, such sparsity can be found in the so-called ``social bubbles". Another source of sparsity is ``locality". That is, when interactions take place physically, the geographical location influences the flow of information leading to a flow of information along continent $\rightarrow$ country $\rightarrow$ department $\rightarrow$ city $\rightarrow$ family/company/school/social clubs etc. 

\section{Notation}
The natural numbers are with zero included and denoted by $\N$. For a subset $J \subset \N$ we denote by $|J|$ its cardinality. The non-negative real numbers $[0,\infty)$ are denoted by $\R_+$. For two sets $K_1,K_2$ we denote their symmetric difference given by $(K_1 \setminus K_2) \cup (K_2 \setminus K_1)$ by $K_1 \Delta K_2$. The function $\dist(\cdot,K)$ denotes the distance function to $K$, and $\dist(K_1,K_2)$ denotes the Hausdorff distance of two subsets of $\R^n$ (with respect to a given metric or norm). We will denote the open ball centered at the origin of radius $r$ with respect to the Euclidean metric by $B_r(0)$. For an index set $J \subset \{1,\ldots,n\}$ we use the notation $\Pi_J$ for the projection $\Pi_J(x_1,\ldots,x_n) := (x_j)_{j\in J}$. The space of continuous functions on a set $X$ is denoted by $\Cc(X)$ and the space of continuously differentiable functions on $\R^n$ by $\Cc^1(\R^n)$.
The Lebesgue measure will always be denoted by $\lambda$. The ring of multivariate polynomials in variables $x = (x_1,\ldots,x_n)$ is denoted by $\R[x] = \R[x_1,\ldots,x_n]$ and for $k \in \N$ the ring of multivariate polynomials of total degree at most $k$ is denoted by $\R[x]_k$.

\section{Subsystem decomposition}
We consider a nonlinear dynamical system
\begin{equation}\label{eq:x'=f(x)}
    \dot{x}(t) = f(x(t))
\end{equation}
with the state $x \in \R^n$ and a locally Lipschitz vector field $f:\R^n\to\R^n$.

\paragraph*{Subsystems}
A central object in this text is the notion of subsystems of a dynamical system (\ref{eq:x'=f(x)}). We define a subsystem as follows.

\begin{definition}[Subsystem]\label{def:Subsystem}
    For a dynamical system~(\ref{eq:x'=f(x)}) and $I \subset \{1,\ldots,n\}$ we call a set of states $x_I := (x_{i})_{i \in I}$ a subsystem if $f_I := (f_i)_{i \in I}$ depends only on $x_I$. In that case, we say $I$ induces a subsystem.
\end{definition}
If $x_I$ is a subsystem, we can regard $f_I$ as a mapping from $\R^{|I|} \to \R^{|I|}$ and the dynamics of $x_I$ is given by
\begin{equation}\label{eq:subsystem}
\dot{x}_I(t) = f_I(x_I(t)).
\end{equation}
A similar relation holds for the respective flows. Denoting $\varphi_t:\R^n\to\R^n$ the flow of~\eqref{eq:x'=f(x)} and $\varphi_t^I:\R^{|I|}\to\R^{|I|}$ the flow of~\eqref{eq:subsystem}, we get
\begin{equation}\label{eq:FlowCommutePI}
    \varphi_t^I \circ \Pi_I = \Pi_I\circ \varphi_t,
\end{equation}
where $\Pi_I:\R^n\to\R^{|I|}$ denotes the canonical projection onto $x_I$, i.e., $\Pi_I(x) = x_I$.

\begin{remark}[Causality]\label{Rem:Causality}
    The notion of a subsystem is closely related to the concept of causality \cite{Granger,Peters,Arrow}. Namely, a set $I \subset \{1,\ldots,n\}$ induces a subsystem if and only if there is no causal influence on the states indexed by $I$, by the remaining states not indexed by $I$.
\end{remark}

\begin{definition}[Subsystem decomposition] The subsystems $x_{I_k}$, $k \in \{1,\ldots,m\}$, $m \le n$, form a subsystem decomposition of~\eqref{eq:x'=f(x)} if $\cup_{k=1}^m I_k = \{1,\ldots,n\}$.
\end{definition}
Note that the decomposition does not need to be disjoint, i.e., we allow $I_k\cap I_j \ne\emptyset$. Note also that the decomposition does not need to be unique (see Example~\ref{ex:subsysems}).

\paragraph*{Sparsity graph} We now introduce the sparsity graph of the dynamics $f$ (which is sometimes also called inference diagram) that allows one to determine an optimal subsystem decomposition.

\begin{definition}[Sparsity graph]\label{def:sparsitygraph}
    The sparsity graph $G_f$ for a function $f:\R^n\to\R^n$ is defined by:
    \begin{enumerate}
        \item The set of nodes is $(x_1,\ldots,x_n)$.
        \item $(x_i,x_j)$ is an edge if the function $f_j$ depends on $x_i$.
    \end{enumerate}
\end{definition}

\begin{remark}\label{rem:sparsitygraph}
    The sparsity graph describes the dependence of the dynamics of a state on other states. More precisely, there exists a directed path from $i$ to $j$ in the sparsity graph of $f$ if and only if the dynamics of $x_j$ depend (indirectly via other states) on the state $x_i$.
\end{remark}

\begin{example}\label{ex:subsysems}
As an example consider the following function $f:\R^{8} \rightarrow \R^{8}$
\begin{eqnarray}\label{eq:Examplef}
    f(x_1,\ldots,x_{8}) & = & (-x_1,x_1x_2,x_3x_1+x_4,x_3x_4,x_5^2+x_1,  \nonumber  \\
    & & x_2x_5-x_8,x_6,x_7 + x_8^2). 
\end{eqnarray}
\noindent
The sparsity graph of $f$ is shown in Figure \ref{fig:ExampleSparsityGraph}. The subsystems are $(x_1)$, $(x_1,x_2)$, $(x_1,x_5)$, $(x_1,x_2,x_5)$, $(x_1,x_3,x_4,x_5)$, $(x_1,x_2,x_5,x_6,x_7,x_8)$ and the whole (and empty) system. A subsystem decomposition is given by $(x_1,x_2,x_5,x_6,x_7,x_8)$, $(x_1,x_3,x_4,x_5)$.
\end{example}

\begin{figure}
	\begin{center}
	\includegraphics[width=0.6\textwidth]{./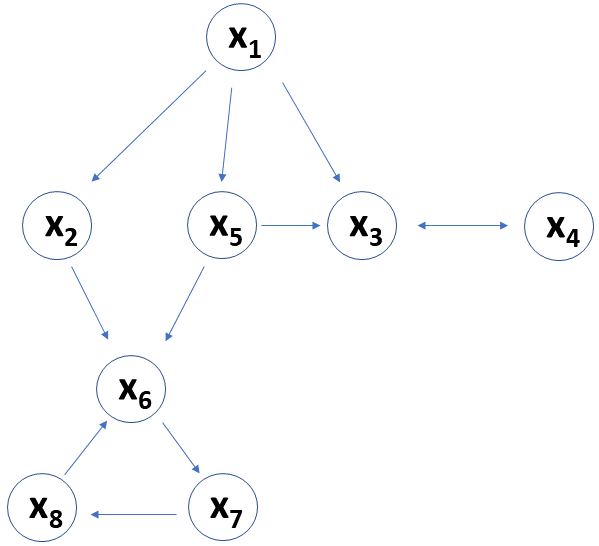}
	\caption{\footnotesize{Sparsity graph for the function $f$ from (\ref{eq:Examplef}).}}
\label{fig:ExampleSparsityGraph}
\end{center}
\end{figure}

From the sparsity graph, we infer the following important objects concerning subsystems.

\begin{definition}[Predecessor, leaf, root, past]\label{def:Past} $ $\vspace{-3.2mm}
\begin{enumerate}
\item For a directed graph with nodes $x_1,\ldots,x_n$ we call a node $x_i$ a predecessor of node $x_j$ if either $x_i = x_j$ or if there is a directed path from $x_i$ to $x_j$.
\item A node $x_i$ is called a leaf if it does not have a successor (i.e., all nodes connected to $x_i$ are its predecessors), and $x_i$ is called a root if it doesn't have a predecessor.
\item The set of all predecessors of $x_i$ is called the past of $x_i$ and denoted by $\mathbf{P}(x_i)$.
\end{enumerate}
\end{definition}

\begin{figure}
    \centering
    \includegraphics[width=0.6\textwidth]{./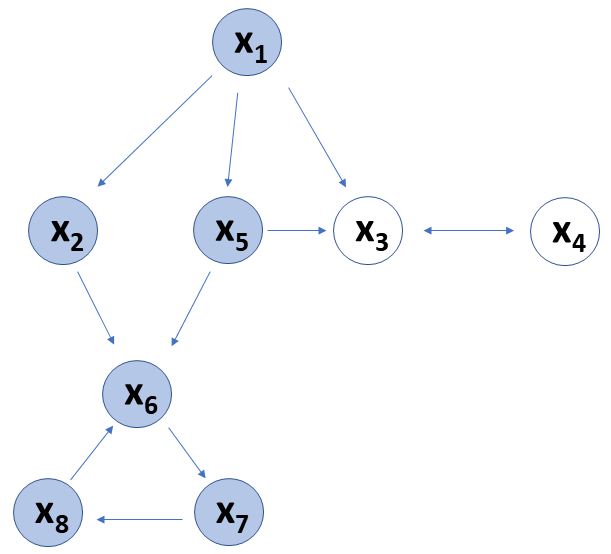}
    \caption{\footnotesize{The past of $x_6$ respectively $x_7$ respectively $x_8$ in the sparsity graph of $f$ from (\ref{eq:Examplef}) is colored in blue. The node $x_1$ is a root.}}\label{fig:sparseGraph}
\end{figure}

The following lemma states that the pasts of single nodes can be viewed as the building blocks of subsystems.
\begin{lemma}\label{lem:PastSubsystem}
    For a given node $x_i$ the past $\mathbf{P}(x_i)$ of this node determines the smallest subsystem containing $x_i$.
\end{lemma}

\begin{proof}
    First, we show that any subsystem $x_I$ containing $x_i$ has to contain $\mathbf{P}(x_i)$. By induction, it suffices to show that a subsystem $x_J$ containing $x_l$ has to contain all predecessors of $x_l$. Let $J$ induce a subsystem, $x_l$ with $l \in J$ be an arbitrary node and $x_j$ a predecessor of $x_l$. Let $x_j,x_{j_1},\ldots,x_{j_k},x_l$ be a directed path in the sparsity graph. It follows from the definition of a subsystem (Definition \ref{def:Subsystem}), that its immediate predecessor $x_{j_k}$ has to be contained in the subsystem induced by $J$. By induction, we conclude that each of the nodes $x_{j_{k-1}},x_{j_{k-2}},\ldots,x_{j_1},x_j$ are contained in the subsystem induced by $J$. It remains to show that $\mathbf{P}(x_i)$ is a subsystem. Let $I := \{j: x_j \in \mathbf{P}(x_i)\} \subset \{1,\ldots,n\}$ denote the set of indices of the past of $x_i$. Let us assume $\mathbf{P}(x_i)$ is not a subsystem, i.e. $f_I$ depends on a state $x_j \notin \mathbf{P}(x_i)$. Let $l \in I$ such that $f_l$ depends on such a state $x_j$. Then $x_j$ is a predecessor of $x_l$ and hence a predecessor of $x_i$.
    That means $x_j \in \mathbf{P}(x_i)$, which contradicts $x_j \notin \mathbf{P}(x_i)$.
\end{proof}

In acyclic sparsity graphs, the nodes with maximal past are leaves since a successor has a larger past than its predecessor. This observation will be essential in the following section where we address finding minimal subsystem decomposition.

\subsection{Optimal subsystem decomposition}

Choosing a subsystem decomposition that allows a decoupling into subsystems of small dimensions is key to maximizing computational savings and is the topic of this section.

A good choice of subsystems depends on both the dynamical interaction of nodes -- which is represented by the sparsity graph of the dynamics -- and the sparse structure of the constraint set. We begin with the dynamics.

A subset of nodes $V \subset \{x_1,\ldots,x_n\}$ is called strongly connected in the sparsity graph if for any two disjoint nodes $x_i,x_j \in V$ there exists a directed path from $x_i$ to $x_j$. We call $V$ a strongly connected component if $V$ is strongly connected and cannot be extended to a strictly larger strongly connected set. For a strongly connected component $V$, for each $x_i \in V$ its past $\mathbf{P}(x_i)$ contains $V$. Because $\mathbf{P}(x_i)$ is the smallest subsystem that contains $x_i$ we see that all nodes in $V$ appear always together in subsystems. For finding subsystems we can therefore turn to a reduced graph, the \emph{condensation graph} of the sparsity graph. In the condensation graph, each strongly connected component is contracted to one node. To give a precise meaning of contracting nodes we recall the notion of node contraction from graph theory~\cite[Section 1.7]{Diestel}.

\begin{definition}[Node contraction]\label{def:NodeContraction}
    Let $G = (V,E)$ be a (directed) graph with nodes $V$ and edges $E$. Let $W_1,\ldots,W_k \subset V$ be pairwise disjoint sets of nodes and fix elements $\omega_i \in W_i$ for $i = 1,\ldots,k$. The graph $G' = (V',E')$ obtained by contracting $W_1,\ldots,W_k$ is given by the nodes $V' :=  \left(V\setminus \bigcup\limits_{i = 1}^k W_i\right) \cup \{\omega_1,\ldots,\omega_k\}$ and for $x_j,x_l \in V'$ the pair $(x_j,x_l)$ is an edge if one of the following holds:
        \begin{enumerate}
            \item $x_j,x_l \in V$ and $(x_j,x_l) \in E$
            \item $x_l \in V$ and $x_j = \omega_i$ for some $1\leq i \leq k$ and there exists $x_m \in W_i$ such that $(x_m,x_l) \in E$
            \item $x_j \in V$ and $x_l = \omega_i$ for some $1\leq i \leq k$ and there exists $x_m \in W_i$ with $(x_j,x_m) \in E$.
        \end{enumerate}
\end{definition}

Figure~\ref{fig:NodeContraction} illustrates a node contraction for the graph from Figure~\ref{fig:sparseGraph}.

\begin{figure}[h]
    \centering
    \includegraphics[width = 0.95\textwidth]{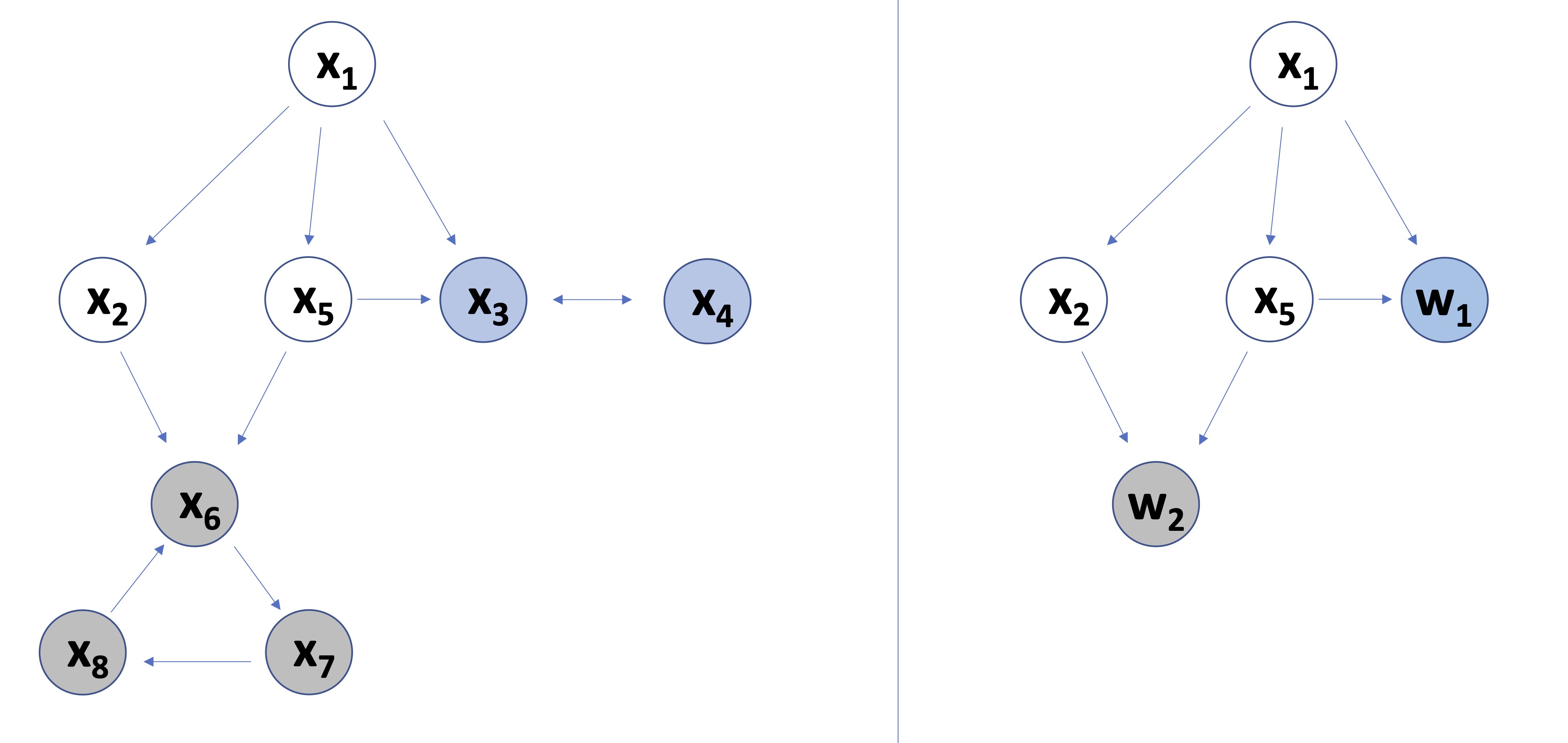}
    \caption{\footnotesize{Example of a node contraction (right) of the graph from Figure \ref{fig:sparseGraph} (left). Nodes $x_3,x_4$ in light blue are contracted to $w_1$ and nodes $x_6,x_7,x_8$ in light grey are contracted to $w_2$.}}
    \label{fig:NodeContraction}
\end{figure}

In the following, we will leverage the condensation graph of a sparsity graph in order to find subsystems by simple graph algorithms.

\begin{definition}[Condensed sparsity graph]\label{def:CondensationGraph}
    Let $G_f$ be the sparsity graph of a function $f:\R^n\to\R^n$. Let $W_1,\ldots,W_k$ be the strongly connected components of $G_f$. The condensed sparsity graph $W_f$ of $G_f$ is obtained by contracting the $W_1,\ldots,W_k$ in $G_f$ in the sense of Definition \ref{def:NodeContraction}.
\end{definition}

\begin{remark}\label{rem:ComputationCondensedGraph}
    The condensation graph can be computed by a depth first search (see~\cite[Section 22.5]{IntroductionToAlgorithms}) in $\mathcal{O}(n + m)$ where $m$ denotes the number of edges in the sparsity graph of $f$.
\end{remark}
\begin{figure}[!t]
\begin{center}
\includegraphics[width=0.4\textwidth]{./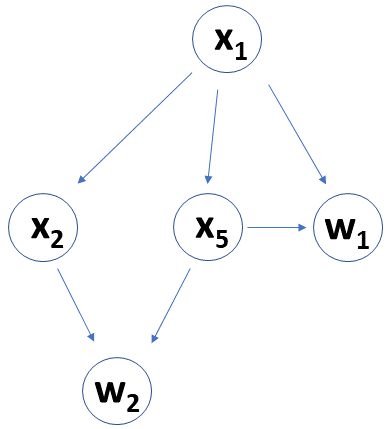}
\caption{\footnotesize{Condensation graph for the sparsity graph from Figure \ref{fig:ExampleSparsityGraph} of $f$ from (\ref{eq:Examplef}). The graph is identical with the node contraction graph on the right in Figure~\ref{fig:NodeContraction} because the contracted nodes in Figure~\ref{fig:NodeContraction} are exactly the connected components.}}
\label{fig:circle}
\end{center}
\end{figure}

The condensed sparsity graph is a directed forest, i.e. it does not have directed cycles and can have multiple roots (see Definition \ref{def:Past}). With regard to our notion of sparsity, it is the simplest graph representation of causal dependence in the dynamical system and builds the starting point for finding subsystems.

The smallest subsystems correspond to the roots in the condensed sparsity graph. The sparsest subsystem decomposition is obtained from the pasts of the leaves in the condensed sparsity graph; we specify this decomposition in Proposition \ref{rem:SubsystemDecompLeafs}.

\begin{proposition}\label{rem:SubsystemDecompLeafs}
    Let $x_{j_1},\ldots,x_{j_m}$ be the leaves in the condensed sparsity graph (where $W_1,\ldots,W_k \subset \{x_1,\ldots,x_n\}$ have been contracted to $\omega_1,\ldots,\omega_k$). The pasts $\mathbf{P}(x_{j_1})$,$\ldots$, $\mathbf{P}(x_{j_m})$ form a subsystem decomposition (where for $1\leq i\leq k$ each $\omega_i$ is replaced by the nodes $W_i$ it represents).
\end{proposition}

To prove Proposition \ref{rem:SubsystemDecompLeafs}, we first present its graph theoretical analogue.

\begin{lemma}\label{rem:Leafs}
	Any directed graph without cycles has at least one leaf. Furthermore, for directed graphs without cycles we have for the set $V$ of nodes that $V = \bigcup\limits_{x \text{ leaf}} \mathbf{P}(x)$.
\end{lemma}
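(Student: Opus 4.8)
The plan is to prove both assertions by the same elementary ``follow the arrows'' argument, exploiting that a finite acyclic graph admits no arbitrarily long directed walks. I will assume throughout that the node set $V$ is finite, as is the case for the sparsity graphs considered here.

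First I would prove the existence of a leaf. Pick any node and keep following outgoing edges, producing a directed walk $x_{i_0} \to x_{i_1} \to \cdots$. By acyclicity no node can repeat along this walk (a repetition would close a directed cycle), so the walk visits distinct nodes and, since $V$ is finite, it must terminate after finitely many steps at a node $x_{i_m}$ having no outgoing edge. By the definition of a leaf --- a node all of whose neighbors are predecessors, i.e.\ a node with no successor --- this terminal node $x_{i_m}$ is a leaf. Equivalently, one may take a directed path of maximal length, which exists by finiteness, and observe that its endpoint cannot have an outgoing edge without either repeating a node (a cycle) or extending the path (contradicting maximality).

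For the identity $V = \bigcup_{x \text{ leaf}} \mathbf{P}(x)$, the inclusion ``$\supseteq$'' is immediate, since every past $\mathbf{P}(x)$ is by definition a subset of $V$. For ``$\subseteq$'' I would fix an arbitrary node $v \in V$ and run the same walk starting at $v$: following outgoing edges yields, by the argument above, a leaf $\ell$ reachable from $v$ via a directed path (or $\ell = v$ if $v$ is already a leaf). In either case $v$ is a predecessor of $\ell$, so $v \in \mathbf{P}(\ell)$ with $\ell$ a leaf, which gives the desired inclusion and hence equality.

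The only point requiring care --- and the single place where acyclicity is essential --- is the termination of the walk at a genuine leaf of the whole graph, rather than at a node that merely leads back into an already-visited region. Acyclicity rules out revisiting nodes, so finiteness forces the walk to stop precisely at a node with no successor; without acyclicity the walk could run forever and no leaf need exist, so this hypothesis cannot be dropped.
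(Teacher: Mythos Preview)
Your proposal is correct and follows essentially the same approach as the paper: both arguments take a maximal directed path (equivalently, follow outgoing edges until termination), use acyclicity to conclude that its endpoint is a leaf, and then start such a path from an arbitrary node to exhibit a leaf whose past contains it. Your version is slightly more explicit about the finiteness assumption and the trivial inclusion $\supseteq$, but the underlying idea is the same.
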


\begin{proof}
    Let $W$ be a maximal path in the graph, i.e. a path that cannot be extended in $G$. Let $x$ be the last node in $W$. We claim that $x$ is a 
    leaf. If $x$ is not a leaf then there exists an edge $(x,y)$ in $G$ for some node $y$. By maximality of $W$ we cannot add $y$ to $W$, which means the edge $(x,y)$ has been used before in $W$. This means that $W$ has visited $x$ before, i.e. there is a part of $W$ that connects $x$ to itself, i.e. a cycle -- contradiction. For the remaining statement let $y$ be an arbitrary node. We can choose a longest path containing this node which has to end in a leaf $x$, hence $y$ is contained in the past of $x$.
\end{proof}

Now we prove Proposition \ref{rem:SubsystemDecompLeafs}.

\begin{proof}
    \textit{Of Proposition \ref{rem:SubsystemDecompLeafs}.} By Lemma \ref{lem:PastSubsystem}, the pasts $\mathbf{P}(x_{j_1}),\ldots,\mathbf{P}(x_{j_m})$ are all subsystems. It remains to show that
    \begin{equation}\label{eq:pastcover}
        \{x_i: i = 1,\ldots,n\} = \bigcup\limits_{l = 1}^m \mathbf{P}(x_{j_l}).
    \end{equation}
    Let $1\leq i \leq n$ and $W_1,\ldots,W_k$ be the strongly connected components in the sparsity graph. If $x_i \notin W_1 \cup \cdots \cup W_k$ then $x_i$ is a node in the condensed sparsity graph. By Lemma \ref{rem:Leafs}, $x_i \in \bigcup\limits_{l = 1}^m \mathbf{P}(x_{j_l})$. If $x_i \in W_s$ for some $1\leq s \leq k$ then $x_i \in \mathbf{P}(\omega_s)$ because $W_s$ is strongly connected. Since $\omega_s$ is a node in the condensed sparsity graph, we have, as above, $\omega_s \in \bigcup\limits_{l = 1}^m \mathbf{P}(x_{j_l})$, i.e. $\omega_s \in \mathbf{P}(x_{j_r})$ for some $1\leq r \leq m$. From $x_i \in \mathbf{P}(\omega_s)$, $\omega_s \in \mathbf{P}(x_{j_r})$, and the definition of the past we infer $x_i \in \mathbf{P}(x_{j_r})$. This concludes the statement.
\end{proof}

\subsection{Subsystem decomposition with constraints}
Often a dynamical system is restricted to a subset $X \subset \R^n$, i.e. solutions $x(\cdot)$ that do not stay in $X$ for all $t$ are deemed undesirable.
For our sparse approach, the constraint set $X$ has to decompose in accordance with the subsystem decomposition. We now define this rigorously.

\begin{definition}\label{def:XdecomposesAccordingly}
    We say that the constraint set $X$ decomposes according to a family $\mathcal{J}$ of index sets $J_1,\ldots,J_N \subset \{1,\ldots,n\}$ if there exist sets $X_1 \subset \R^{|J_1|},\ldots,X_N \subset \R^{|J_N|}$ such that
\begin{equation}\label{eq:XDecompositionSubsystemCovering}
    X = \{x \in \R^n: \Pi_{J_l}(x) \in X_{l} \text{ for } l = 1,\ldots,N\}.
\end{equation}
\end{definition}

The notion of a set $X$ decomposing according to a family of index sets generalizes the situation where the set $X$ factors into a Cartesian product. A factorization of $X$ into a Cartesian product corresponds to the case where the sets $J_1,\ldots,J_N$ are pairwise disjoint, as we mention in the following Remark. 

\begin{remark}\label{rem:Xfactors}
    In the case where $X$ factors into a Cartesian product
    \begin{equation*}
        X = X_1 \times \cdots \times X_N \text{ with } X_l \subset \R^{n_l} \text{ for } l = 1,\ldots,N
    \end{equation*}
    the set $X$ decomposes according to $J_1,\ldots,J_N$ with $J_1 := \{1,\ldots,n_1\}$ and $J_l = \{n_1 + \ldots + n_{l-1} +1, n_1 + \ldots n_l\}$ for $l = 2,\ldots,N$, namely $X = \{x \in \R^n: \Pi_{J_l}(x) \in X_l \text{ for } l = 1,\ldots,N\}$. 
\end{remark}





Based on a decomposition of $X$ in the sense of Definition \ref{def:XdecomposesAccordingly}, we can search for a subsystem decomposition for which $X$ decomposes accordingly. To do so we extend the sparsity graph $G_f$ of $f$ to the sparsity graph $G_{f,\mathcal{J}}$ which additionally includes the sparsity structure of $X$.

\begin{definition}[The sparsity graph $G_{f,\mathcal{J}}$]\label{def:G_f,I}
Let $G_f$ be the sparsity graph of $f$ and $\mathcal{J}$ a family of index sets $J_1,\ldots,J_N \subset \{1,\ldots,n\}$ for which $X$ decomposes accordingly. The graph $G_{f,\mathcal{J}}$ has the nodes $x_1,\ldots,x_n$ and $(x_i,x_l)$ is an edge of $G_{f,\mathcal{J}}$ if $(x_i,x_l)$ is an edge in $G_f$ or if there exists $1\leq r\leq N$ such that $i,l \in J_r$.
\end{definition}

For the graph $G_{f,\mathcal{J}}$ we form the condensation graph and the subsystem decomposition corresponding to the family of pasts of its leaves from Proposition \ref{rem:SubsystemDecompLeafs}. We will show in Theorem \ref{thm:mainthmformal}, that this indeed forms a subsystem decomposition for which $X$ decomposes accordingly. In Algorithm \ref{Alg:SubsysDecomposition} we state how to obtain this subsystem decomposition.

\begin{Algorithm}\label{Alg:SubsysDecomposition}
    Input: The sparsity graph $G_f$ of $f$ and a family $\mathcal{J}$ of index sets for which $X$ decomposes accordingly.
    \begin{enumerate}
        \item[1.] Compute the graph $G_{f,\mathcal{J}}$ from Definition \ref{def:G_f,I}.
        \item[2.] Compute the strongly connected components $W_1,\ldots,W_k$ of $G_{f,\mathcal{J}}$.
        \item[3.] Build the condensed graph $\mathcal{G}_{f,\mathcal{J}}$ of $G_{f,\mathcal{J}}$:  Contract each of the strongly connected components $W_i$ to $\omega_i$ (see Definition \ref{def:NodeContraction}).
        \item[4.] Find the leaves $v_1,\ldots,v_m$ in $\mathcal{G}_{f,\mathcal{J}}$.
        \item[5.] Define $I_1,\ldots,I_m$: Set $I_l := \mathbf{P}(v_l)$ for $l =1,\ldots,m$ (where for $1\leq i\leq k$ each $\omega_i$ is replaced by the nodes $W_i$ it represents).
    \end{enumerate}
    Output: Subsystem covering $x_{I_1},\ldots,x_{I_m}$.
\end{Algorithm}

The largest appearing state space dimension based on the subsystem decomposition obtained from Algorithm \ref{Alg:SubsysDecomposition} is given by the largest number of predecessors of a node in the graph $G_{f,\mathcal{J}}$. This number is given by
\begin{equation}\label{eq:LargestWeightedPast}
	\omega := \max\limits_{l} |\mathbf{P}(v_l)|.
\end{equation}
where $|\mathbf{P}(v_l)|$ is the number predecessors of the node $v_j$ in the sparsity graph $G_{f,\mathcal{J}}$. We formulate this statement precisely in the following theorem.

\begin{theorem}\label{thm:mainthmformal}
    Let $\mathcal{J}$ be a family of index sets for which $X$ decomposes accordingly. Algorithm \ref{Alg:SubsysDecomposition} gives a subsystem decomposition $x_{I_1},\ldots,x_{I_m}$ for which $X$ decomposes accordingly such that the largest subsystem contains $\omega$ nodes, i.e.
    \begin{equation*}
        \max\limits_{l = 1,\ldots,m} |I_l| = \omega,
    \end{equation*}
    where $\omega$ is given by (\ref{eq:LargestWeightedPast}).
\end{theorem}

\begin{proof}
    We will show that for $x_{I_1},\ldots,x_{I_m}$ it holds:
    \begin{enumerate}
        \item \label{itemFamilyOfLeavesSubsystemDecomposition} $x_{I_1},\ldots,x_{I_m}$ is a subsystem decomposition for which $X$ factors accordingly, and
        \item \label{itemFamilyLeavesNumberVariables} the largest number of variables in each of these subsystems is at most $\omega$.
    \end{enumerate}
    That $x_{I_1},\ldots,x_{I_m}$ is a subsystem decomposition can be verified similarly to Proposition \ref{rem:SubsystemDecompLeafs} because the sparsity graph $G_{f}$ is a subgraph of $G_{f,\mathcal{J}}$. To show \ref{itemFamilyOfLeavesSubsystemDecomposition}. it remains to check that $X$ decomposes accordingly. For $1\leq r \leq N$ and $i,j \in J_r$, the two nodes $x_i$ and $x_j$ form a cycle in the graph $G_{f,\mathcal{J}}$. Therefore, the nodes $(x_j)_{j \in J_r}$ get contracted in the condensation graph of $G_{f,\mathcal{J}}$ for all $r = 1,\ldots,N$ and each of the sets $I_l$ can be written as
    \begin{equation*}
        I_l = \bigcup\limits_{l \in Z_k} J_l,
    \end{equation*}
    where the sets $Z_1,\ldots,Z_m$ form a partition of $\{1,\ldots,N\}$. We claim that $X$ decomposes according to the subsystem decomposition $x_{I_1},\ldots,x_{I_m}$ by
    \begin{equation}\label{eq:XdecomposesAccordinglyPastDecomposition}
        X = \{x \in \R^n: \Pi_{I_k}(x) \in Y_k, k = 1,\ldots,m\}
    \end{equation}
    where for $k = 1,\ldots,m$ the sets $Y_k$ are given by
    \begin{equation*}
        Y_k := \{\Pi_{I_k}(x) \in \R^{|I_k|} : x \in \R^n, x_{J_l} \in X_l, l \in Z_k\}.
    \end{equation*}
    We conclude (\ref{eq:XdecomposesAccordinglyPastDecomposition}) via
    \begin{align*}
        X &= \{x \in \R^n: \Pi_{J_l}(x) \in X_l, l = 1,\ldots,N\}\\
        &= \{x \in \R^n: \Pi_{J_l}(x) \in X_l, l \in \bigcup\limits_{k = 1}^m Z_k\}\\
        &= \{x \in \R^n: \Pi_{I_k}(x) \in Y_k, k = 1,\ldots,m\},
    \end{align*}
    because $X$ decomposes according to $J_1,\ldots,J_N$. To verify claim (\ref{itemFamilyLeavesNumberVariables}) recall that each subsystem $x_{I_k}$ corresponds to the past of a node $v_k$ in the condensation graph of $G_{f,\mathcal{J}}$. Because the condensation graph contracts only the strongly connected components it holds $x_{I_k}$ is nothing else than the past in the sparsity graph $G_{f,\mathcal{J}}$ of the node $x_{i_k}$ with $x_{i_k} = v_k$.
\end{proof}

The subsystem decomposition from Algorithm \ref{Alg:SubsysDecomposition} is obtained by pasts of single nodes. Thus they are minimal in the sense that we cannot find a subsystem decomposition, based on the given decomposition $\mathcal{J}$ for $X$, such that all subsystems have strictly lower dimensions than $\omega$ from (\ref{eq:LargestWeightedPast}). But it remains to address finding a good decomposition of $X$. In applications, the sparsity of the dynamics often comes with the same sparse structure for $X$, i.e. $X$ decomposes according to any subsystem decomposition. But this might not always be the case. Finding the best decomposition of $X$, i.e. a decomposition that minimizes $\omega$ from (\ref{eq:LargestWeightedPast}), is a delicate task. We address a (suboptimal) procedure based on factoring $X$ into a Cartesian product motivated from Remark \ref{rem:Xfactors}. We say that index sets $J_1,\ldots,J_N \subset \{1,\ldots,n\}$ induce a factorization of $X$ if $J_1,\ldots,J_N$ is a partition of $\{1,\ldots,n\}$ and
\begin{equation*}
    X = \{x \in \R^n: \Pi_{J_l}(x) \in \Pi_{J_l}(X) \text{ for } l = 1,\ldots,N\}.
\end{equation*}
As noted in Remark \ref{rem:Xfactors}, $X$ decomposes according to the family $\mathcal{J}$ of index sets $J_1,\ldots,J_N$ if $J_1,\ldots,J_N$ induce a factorization. We propose using a minimal such factorization $\mathcal{J}$ as input to Algorithm \ref{Alg:SubsysDecomposition} to obtain a fine subsystem decomposition. The following lemma assures existence of a minimal factorization.

\begin{lemma}\label{lem:MinimalFactorization}
    There exists a minimal factorization for $X$; that is there exist index sets $J_1,\ldots,J_N$ that induce a factorization of $X$, such that for any other factorization induced by $\tilde{J}_1,\ldots,\tilde{J}_{\tilde{N}}$ we have for all $l = 1,\ldots,\tilde{N}$ that $\tilde{J}_l = \bigcup\limits_{k: J_k \subset \tilde{J}_l} J_k$.
\end{lemma}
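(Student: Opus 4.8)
The plan is to recognize that a partition $J_1,\ldots,J_N$ induces a factorization of $X$ precisely when $X = \prod_{i=1}^N P_{J_i}(X)$ under the coordinate identification $\R^n \cong \prod_i \R^{n_i}$, and then to work inside the finite lattice of all partitions of $\{1,\ldots,n\}$ ordered by refinement. Let $\mathfrak{F}$ denote the family of partitions that induce a factorization of $X$. It is nonempty, since the trivial one-block partition $\{\{1,\ldots,n\}\}$ factors $X$ (with $X_1 = X$). The whole statement then amounts to showing that $\mathfrak{F}$ has a finest element $J_1,\ldots,J_N$, because being finest says exactly that every other factorizing partition $I_1,\ldots,I_M$ is coarser, i.e. each $I_i$ is a union of blocks $J_k$, which is the displayed identity $I_i = \bigcup_{k:\,J_k \subset I_i} J_k$.

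The key step, and the one I expect to be the main obstacle, is to show that $\mathfrak{F}$ is closed under taking common refinements (meets in the partition lattice). Concretely: if $X$ factors with respect to $\mathcal{J} = \{J_i\}$ and with respect to $\mathcal{K} = \{K_j\}$, then it factors with respect to the refinement whose blocks are the nonempty intersections $L_{ij} := J_i \cap K_j$. Setting $Y_{ij} := P_{L_{ij}}(X)$, the inclusion $X \subset \{x : P_{L_{ij}} x \in Y_{ij} \text{ for all } i,j\}$ is immediate, so the content is the reverse inclusion. Since $X$ factors along $\mathcal{J}$, it suffices to fix $i$ and show $P_{J_i} x \in P_{J_i}(X)$ whenever $P_{L_{ij}} x \in Y_{ij}$ for all $j$, and this reduces to the claim that $P_{J_i}(X)$ itself factors along the partition $\{L_{ij}\}_j$ of $J_i$.

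To prove this claim I use the $\mathcal{K}$-factorization: writing $X = \prod_j P_{K_j}(X)$ and projecting a product set onto the sub-coordinates $J_i = \bigsqcup_j L_{ij}$ gives $P_{J_i}(X) = \prod_j P_{L_{ij}}(P_{K_j}(X)) = \prod_j P_{L_{ij}}(X) = \prod_j Y_{ij}$, which is precisely the assertion that $P_{J_i}(X)$ factors along $\{L_{ij}\}_j$. This elementary ``projection of a product is the product of projections'' fact is the computational heart of the argument; everything else is bookkeeping in the partition lattice.

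With closure under meets in hand, the conclusion is routine. The family $\mathfrak{F}$ is finite, since there are finitely many partitions of a finite set, and nonempty and closed under the binary meet; hence by finiteness and associativity the meet $\bigwedge_{\mathcal{P} \in \mathfrak{F}} \mathcal{P}$ of all its members again lies in $\mathfrak{F}$ and is by construction its finest element. Calling this partition $J_1,\ldots,J_N$ yields the minimal factorization, and its minimality unwinds exactly to the relation $I_i = \bigcup_{k:\,J_k \subset I_i} J_k$ for every factorizing partition $I_1,\ldots,I_M$: each $J_k$ is contained in a single block $I_i$, and conversely each element of $I_i$ lies in some such $J_k$, giving both inclusions.
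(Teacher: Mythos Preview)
Your argument is correct. The closure of $\mathfrak{F}$ under common refinement is the heart of the matter, and your verification via $P_{J_i}(X)=\prod_j P_{L_{ij}}(X)$ using the $\mathcal{K}$-factorization is clean and valid; the rest is indeed lattice bookkeeping.

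The paper takes a related but structurally different route. Instead of the full partition lattice, it works with the set $T$ of \emph{two-block} factorizations, i.e.\ subsets $J\subset\{1,\ldots,n\}$ for which $\{J,J^c\}$ factors $X$. It shows $T$ is closed under complement and intersection (hence union), so $T$ is a finite topology; the minimal open neighbourhoods $U_i=\bigcap_{i\in J\in T}J$ then form the desired partition, and a separate inductive argument verifies that this basis partition actually factors $X$. Your approach is more direct: by proving closure under arbitrary refinements in one stroke you avoid the paper's final step of upgrading from ``each $U_i$ splits off'' to ``the whole partition $\{U_i\}$ factors $X$''. The paper's route, on the other hand, keeps the combinatorics on subsets rather than partitions, which makes the intersection-closure argument a touch more hands-on. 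Both hinge on the same underlying fact---that two independent splittings can be intersected---just packaged differently.
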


\begin{proof} We give a proof in the Appendix.
\end{proof}

\section{Decomposition of the region of attraction, maximum positively invariant set, and global attractors}\label{sec:genGraphs}
In this section, we describe decompositions of the region of attraction (ROA), maximum positively invariant set (MPI) set, and the global attractors (GA) based on subsystems. Given subsystems induced by $I_1,\ldots,I_k$, the guiding idea is to characterize the desired set $S$ for the whole system via the corresponding sets $S_1,\ldots,S_m$ for the subsystems. We show that the set $S$ decomposes according to $I_1,\ldots,I_m$ by
\begin{equation}\label{eq:DecompositionS}
     S = \{x \in X: \Pi_{I_j}(x) \in S_j \text{ for } j = 1,\ldots,m\}.
\end{equation}
This decomposition shows that we can find the desired set $S$ by computing its analogues for the subsystems and ``gluing" them together as in (\ref{eq:DecompositionS}). We illustrate a computational application of this approach in Section \ref{sec:StructuredSemiDefProgramming}.
    
The sets of interest, the ROA, MPI set and GA, are defined in the following.

\begin{definition}[Region of attraction]
	For a dynamical system, a finite time $T \in \R_+$ and a target set $X_\mathrm{T} \subset X$ the region of attraction (ROA) of $X_\T$ is defined as
	\begin{eqnarray}\label{def:RoA}
		R_T  := \big\{ x_0 \in X & : & \exists x(\cdot) \text{ s.t. } \dot{x}(t) = f(x(t)), \; x(0) = x_0, \notag \\
		& &  x(t) \in X \text{ on } [0,T],  \; x(T) \in X_\T\big\}.
	\end{eqnarray}
\end{definition}

\begin{remark} The reachable set from an initial set $X_\I \subset X$ in time $T$
\begin{eqnarray}\label{eq:ReachableSet}
	S_T := \{x \in X & : &  \dot{x}(t) = f(x(t)), x(t) \in X \text{ on } [0,T],\notag\\ & & x(T) = x, x(0) \in X_\mathrm{I}\}
\end{eqnarray}
can be obtained by time reversal, i.e. by $S_T = R_T$ for $X_\T := X_\I$ and the dynamics given by $\dot{x} = -f(x)$.
\end{remark}

\begin{definition}[Maximum positively invariant set]\label{def:MPI}
    The maximum positively invariant (MPI) set for a dynamical system (\ref{eq:x'=f(x)}) with constraint set $X$ is the set of initial conditions $x_0$ such that the solutions $\varphi_t(x_0)$ stay in $X$ for all $t \in \R_+$.
\end{definition}
The MPI set will be denoted by $M_+$ in the following.

\begin{definition}[Global attractor]
	A compact set $\A \subset X$ is called the global attractor (GA) if it is minimal uniformly attracting, i.e., it is the smallest compact set $\A$ such that
	\[
	\lim_{t\to\infty} \dist(\varphi_t(M_+),\A) = 0.
	\]
\end{definition}

\begin{remark}\label{rem:Attractor}
If $X$ is compact then the global attractor exists and is unique~\cite{Robinson}. Further, the global attractor is characterized by being invariant, i.e. $\varphi_t(\A) = \A$ for all $t \in \R_+$, and attractive see~\cite{Robinson}.
\end{remark}

In the following Theorem \ref{thm:DecouplingRoAMPIAttractorGeneral} we present how the sparsity in the dynamics and the state constraint $X$ give rise to a decomposition of the ROA, MPI set and GA as in (\ref{eq:DecompositionS}). The key point of the decomposition of $X$ is that we can certify membership of $x$ to $X$ through membership of $x_{I_j}$ to $X_j$ for $1\leq j \leq m$ and the last task is performed solely in the subsystem $x_{I_j}$. 

\begin{theorem}\label{thm:DecouplingRoAMPIAttractorGeneral}
    Let $x_{I_1},\ldots,x_{I_m}$ be a subsystem decomposition of (\ref{eq:x'=f(x)}), $X$ be compact and decompose according to $I_1,\ldots,I_m$ (see Definition \ref{def:XdecomposesAccordingly}). In the case of the ROA we also assume that there exist $X_{1,\mathrm{T}} \subset \R^{|I_1|},\ldots, X_{m,\mathrm{T}} \subset \R^{|I_m|}$ such that $X_\mathrm{T} = \{x \in \R^n: x_{I_j} \in X_{j,\mathrm{T}}, 1\leq j \leq m\}$ and let $T > 0$. 
    Then the ROA $R_T$, MPI set $M_+$ and GA $\A$ are given by
	\begin{equation}\label{eq:RoASparseGeneral}
		R_T = \{x \in X : \Pi_{I_j}(x) \in R_T^j \text{ for } j = 1,\ldots,m\}
	\end{equation}
	\vspace{-5mm}
	\begin{equation}\label{eq:MPISparseGeneral}
		M_+ = \{x \in X : \Pi_{I_j}(x) \in M_+^j \text{ for } j = 1,\ldots,m\}
	\end{equation}
	\begin{equation}\label{eq:AttractorSparseGeneral}
		\A = \{x \in X : \Pi_{I_j}(x) \in \A^j \text{ for } j = 1,\ldots,m\}
	\end{equation}
	where for $1\leq j \leq m$ the sets $R_T^j, M_+^j, \A^j$ denote the ROA, MPI set and GA for the subsystem $x_{I_j}$ with constraint set $X_j$.
\end{theorem}

\begin{proof} We start with the ROA. Let $R$ denote the right-hand side of (\ref{eq:RoASparseGeneral}). Let $x_0 \in R$. We have to show that for the solution $x(\cdot)$ of (\ref{eq:x'=f(x)}) with initial value $x_0$ we have $x(t) \in X$ for $t \in [0,T]$ and $x(T) \in X_T$. For $1\leq j \leq m$ from (\ref{eq:FlowCommutePI}) we infer that $x_{I_j}(\cdot)$ is the solution of the subsystem equation $\dot{x}_{I_j}(t) = f_{I_j}(x_{I_j}(t))$ with initial value $\Pi_{I_j}(x_0)$. Thus, by definition of the set $R$, it follows
\begin{equation}\label{eq:RInR_T}
    \Pi_{I_j}(x(t)) \in X_j \text{ for all } t \in [0,T] \text{ and } \Pi_{I_j}(x(T)) \in X_{j,\mathrm{T}}.
\end{equation}
Because $X$ and $X_{\mathrm{T}}$ decompose accordingly (\ref{eq:RInR_T}) means that $x(t) \in X$ for all $t \in [0,T]$ and $x(T) \in X_{\mathrm{T}}$ -- in other words $x_0 \in R_T$. To see that the ROA $R_T$ is contained in $R$ let $x_0 \in R_T$ and let $x(\cdot)$ be the corresponding solution of the dynamical system with initial value $x_0$. Then from $x(t) \in X$ for all $t \in [0,T]$ we get $\Pi_{I_j}(x(t)) \in X_j$ for all $1\leq j \leq m$ because $X$ decomposes accordingly, and $x(T) \in X_\mathrm{T}$, i.e. $\Pi_{I_j}(x(T)) \in X_{j,\mathrm{T}}$ for all $1\leq j \leq m$ because $X_\mathrm{T}$ decomposes accordingly. Again, by (\ref{eq:FlowCommutePI}), it follows $\Pi_{I_j}(x) \in R_T^j$ for $1\leq j \leq m$, and hence $x \in R$. For the MPI set $M_+$ the argument is similar. Let $M$ denote the right-hand side of (\ref{eq:MPISparseGeneral}). For $x_0 \in M$ let $x(\cdot)$ denote the solution of (\ref{eq:x'=f(x)}) with initial value $x_0$. By definition of $M$ it holds $\Pi_{I_j}(x(t)) \in X_j$ for all $t \in \R_+$. Since $X$ decomposes accordingly it follows $x(t) \in X$ for all $t \in \R_+$ and thus $x_0 = x(0) \in M_+$. On the other hand for $x_0 \in M_+$ it follows $x(t) \in X$ in for all $t \in \R_+$ where $x(t)$ is the solution of (\ref{eq:x'=f(x)}). Since $X$ decomposes correspondingly this means $\Pi_{I_j}(x(t)) \in X_j$ for all $1\leq j \leq m$ and $t \in \R_+$. By (\ref{eq:FlowCommutePI}) the function $\Pi_{I_j}(x(\cdot))$ is the solution for the subsystem equation $\dot{\Pi_{I_j} \circ x}(t) = f_{I_j}(\Pi_{I_j}(x(t)))$ with initial value $\Pi_{I_j}(x_0)$, which shows that $\Pi_{I_j}(x_0) \in M_+^j$ for each $1\leq k \leq m$ -- that just states that $x_0 \in M$. For the GA we use the result proven for the MPI set and that $\A = M_+ \cap M_-$ where $M_+$ denotes the MPI set and $M_-$ the maximum negatively invariant set. The MNI set is the MPI set in reversed time direction (see \cite{Robinson}) and hence the decoupling result is also true for the MNI set. We get
\begin{eqnarray*}
    \A & = & M_+ \cap M_-\\
      & = & \{x \in X : \Pi_{I_j}(x) \in M_+^j \text{ for } j = 1,\ldots,m\} \cap\\
      & & \{x \in X : \Pi_{I_j}(x) \in M_-^j \text{ for } j = 1,\ldots,m\}\\
      & = & \bigcap\limits_{j = 1}^m\{x \in X : \Pi_{I_j}(x) \in M_+^j \cap M_-^j = \A^j\}\\
      & = & \{x \in X : \Pi_{I_j}(x) \in \A^j \text{ for } j = 1,\ldots,m\}
\end{eqnarray*}
\end{proof}


\begin{remark}
    The sets $R_T^j$, $M_+^j$ and $\A^j$ for $1\leq j \leq m$ in Theorem \ref{thm:DecouplingRoAMPIAttractorGeneral} do not coincide with $\Pi_{I_j}(R_T)$, $\Pi_{I_j}(M_+)$ respectively $\Pi_{I_j}(\A)$ respectively. It is easy to construct examples where the sets $R_T$, $M_+$ and $\A$ are empty but the sets $R_T^j$, $M_+^j$ and $\A^j$ are not. We give a simple example for which $M_+ = \emptyset$ while $M_+^{j} \neq \emptyset$. Consider the system given by $\dot{x}_1 = \dot{x}_2 = 0$ and $\dot{x}_3 = 1$ on $X = [0,1]^3$. We have the subsystem decomposition $x_{I_1},x_{I_2},x_{I_3}$ given by $I_1 = \{1\}, I_2 = \{1,2\}, I_3 = \{1,3\}$ for which $X$ decomposes accordingly. Here $M_+^{3}$ and hence also $M_+$ are empty while $M_+^{1} = [0,1]$ and $M_+^{2} = [0,1]^2$ are not empty.
\end{remark}

Theorem \ref{thm:DecouplingRoAMPIAttractorGeneral} gives rise to the following algorithm for a sparse computation of the ROA, MPI set, and GA.

\begin{Algorithm}[Decoupling procedure]\label{Alg:DecouplingProcedure}
Input: A dynamical system induced by $f$ with constraint set $X$ (and terminal constraint $X_\mathrm{T}$ for the ROA) and a method for approximating/computing the ROA, MPI set, or GA for an arbitrary dynamical system.
\begin{enumerate}
    \item[1.] Subsystem selection: Find a subsystem decomposition $x_{I_1},\ldots,x_{I_m}$ such that $X$ (and $X_\mathrm{T}$) decomposes according to $I_1,\ldots
    ,I_m$.
    \item[2.] Compute approximations for subsystems: Use the given method to compute approximations $\hat{S}_{1},\ldots,\hat{S}_{m}$ of the ROAs, MPI sets or GAs respectively for the subsystems induced $I_1,\ldots,I_m$.
    \item[3.] Gluing: Glue $\hat{S}_1,\ldots,\hat{S}_m$ together as suggested by Theorem \ref{thm:DecouplingRoAMPIAttractorGeneral} by
    \begin{equation*}
        \hat{S} := \{x \in X: \Pi_{I_j}(x) \in \hat{S}_{j} \text{ for } j = 1,\ldots,m\}.
    \end{equation*}
\end{enumerate}
\end{Algorithm}

Next, we show that the decoupling procedure preserves the convergence of any method for constructing outer approximations to ROA, MPI, or GA, which is crucial for practical deployment.


To quantify convergence, we consider the following two (pseudo) metrics on subsets on $\R^n$:
\begin{enumerate}
    \item Hausdorff distance: For sets $K_1,K_2 \subset \R^n$ the Hausdorff distance $\dist(\cdot,\cdot)$ is defined by
\begin{equation*}
    \dist(K_1,K_2) := \inf \{\varepsilon \geq 0 : K_2 \subset B_\varepsilon(K_1), \; K_1 \subset B_\varepsilon(K_2)\}
\end{equation*}
where $B_\varepsilon(K_i) := \{x + z \in \R^n : x\in K_i, z\in B_\varepsilon(0)\}$ and $B_\varepsilon(0) \subset \R^n$ is the euclidean ball with radius $\varepsilon$.
\item Lebesgue measures discrepancy: For sets $K_1,K_2\subset \R^n$ the Lebesgue measure discrepancy $d_\lambda(K_1,K_2)$ is defined by
\begin{equation}\label{eq:defLebMeasureDiscrepancy}
    d_\lambda(K_1,K_2):= \lambda(K_1 \Delta K_2).
\end{equation}
\end{enumerate}

In the following theorem, we state a quantitative bound (with respect to $\dist$ and $d_\lambda$) for the approximation of a desired set by the gluing construction from point 3 in Algorithm \ref{Alg:DecouplingProcedure}. The only important property is that the set of interest $S$ decomposes according to $I_1,\ldots,I_m$. Therefore we formulate it independently from a dynamic context.

\begin{theorem}\label{thm:GeneralizedDecouplingProcedure}
Let $n \in \mathbb{N}$ and $I_1,\ldots,I_m \subset \{1,\ldots,n\}$ with $\bigcup_{k = 1}^m I_k = \{1,\ldots,n\}$. For $k = 1,\ldots,m$ let $S_k \subset \R^{|I_k|}$ and $S \subset \R^n$ for which it holds $S= \{x \in \R^n: \Pi_{I_k}(x)\in S_k,k\in\{1,\ldots,m\}\}$. Let $(S_1^{l})_{l \in \N},\ldots,(S_m^{l})_{l \in \N}$ be sequences of sets with $S_k^{l} \subset \R^{|I_k|}$ for all $1\leq k \leq m$ and $l\in \N$ and set
\begin{equation*}
         S^{l} = \{x \in \R^n:  \Pi_{I_k}(x) \in  S_k^{l},\,k\in\{1,\ldots,m\}\}.
    \end{equation*}
The following hold
\begin{enumerate}
    \item Suppose that $ S_k^{l}\supset S_k$ for all $1\leq k \leq m, l\in \N$ and
    \begin{equation}\label{eq:ApproxQualityHausdorffGeneral}
        \mathrm{dist}( S_k^{l},S_k) \to 0 \text{ as } l\to\infty.
    \end{equation}
    Then
    \begin{equation}\label{eq:HausdorffConvergenceGeneral}
            \dist(S^{l},S) \rightarrow 0\; , \; \text{ as } l \rightarrow \infty.
        \end{equation}
    and $ S^{l} \supset S$.
    \item For the Lebesgue measures discrepancy $d_\lambda$ it holds
        \begin{eqnarray}\label{eq:ApproxQualityLebesgueGeneral}
                d_\lambda(S,S^{l}) & \leq & \sum\limits_{k = 1}^m \lambda(S_k \Delta {S}^{l}_k) \lambda(\Pi_{\{1,\ldots,n\}\setminus I_k}(X)).
        \end{eqnarray}
        In particular, if ${S}_k^{l}$ converges to $S_k$ as $l \rightarrow \infty$ with respect to $d_\lambda$ for all $k = 1,\ldots,m$ then ${S}^{l}$ converges to $S$ with respect to $d_\lambda$.
\end{enumerate}
\end{theorem}

\begin{proof}
    For the first statement note that the inclusion $ S_k^{l}\supset S_k$ for all $1\leq k \leq m, l\in \N$ implies ${S} \supset S$. To show the claim (\ref{eq:HausdorffConvergenceGeneral}), let us assume it doesn't hold. Then there exists $\varepsilon > 0$ and an unbounded subsequence $(l_r)_{r \in \N}$ such that
\begin{equation}\label{eq:HausdorffDistanceEps}
    \dist({S}^{l_r},S) > \varepsilon
\end{equation}
and we find points $x^{(l_r)} \in {S}^{l_r}$ with $\dist(x^{(l_r)},S) > \varepsilon$. From boundedness of $S_1,\ldots,S_m$ and the assumption (\ref{eq:ApproxQualityHausdorffGeneral}) it follows that there exists $x \in \R^{n}$ and a subsequence of $(l_r)_{r \in \N}$ which we will still denote by $(l_r)_{r \in \N}$ such that $x^{(l_r)} \rightarrow x$ as $r \rightarrow \infty$. By assumption (\ref{eq:ApproxQualityHausdorffGeneral}) there exist $y^j_{l_r} \in S_j$ for $k = 1,\ldots,m$ with $\|y_k^{l_r} - x^{(l_r)}_{I_k}\| \rightarrow 0$ as $m \rightarrow \infty$. Hence also $y^{l_r}_k \rightarrow \Pi_{I_k}(x)$ as $m \rightarrow \infty$ for $k = 1,\ldots,m$. Because $S_1,\ldots,S_l$ are closed it follows $\Pi_{I_k}(x) \in S_k$ for $k = 1,\ldots,m$ and by Theorem \ref{thm:DecouplingRoAMPIAttractorGeneral} we get $x \in S$. In particular, we get
\begin{equation*}
    \varepsilon < \dist(x^{(l_r)},S) \leq \|x^{(l_r)} - x\| \rightarrow 0
\end{equation*}
as $m \rightarrow \infty$, which is a contradiction. To conclude (\ref{eq:ApproxQualityLebesgueGeneral}) note that
\begin{equation*}
    S \Delta {S} \subset \bigcup\limits_{k = 1}^m\{x \in X : \Pi_{I_k}(x) \in S_k\Delta {S}_k\}.
\end{equation*}
Applying the Lebesgue measure to this inclusion gives
\begin{align*}
    \lambda(S \Delta {S}^{l}) & \leq \sum\limits_{k = 1}^m \lambda\left(\{x \in X: \Pi_{I_k}(x) \in S_k\Delta {S}^{l}_k\}\right)\\
    & \leq \sum\limits_{k = 1}^m \lambda(S_{k} \Delta {S}^{l}_{k}) \lambda(\Pi_{\{1,\ldots,n\}\setminus I_k}(X)).
\end{align*}
\end{proof}

\begin{remark}\label{rem:ConvRates}
    Statement (2) in Theorem \ref{thm:GeneralizedDecouplingProcedure} shows that the approximation error of $S^l$ for $S$ is at most proportional to the sum of the approximation errors $S_k^l$ for $S_k$. For problems from dynamical systems, the quality of approximations $S_k^l$ of $S_k$ typically depends on the dimension of the system and gets weaker with increasing dimension.  Thus, convergence rates carry over to the sparse approach and benefit from lower dimensions.
\end{remark}

In Section \ref{sec:StructuredSemiDefProgramming} we apply the decomposition procedure to approximate  the ROA, MPI set and GA via the convex optimization techniques from~\cite{BossRoA},~\cite{Boss14} and~\cite{Us}.

\section{Sparse approximation via structured SDPs}\label{sec:StructuredSemiDefProgramming}

As an illustrative example, we pair our decomposition technique, Algorithm~\ref{Alg:DecouplingProcedure}, with the convex optimization methods for computing outer approximations of the  ROA, MPI and GA  from~\cite{BossRoA,Boss14,Us}. The computational methods from~\cite{BossRoA,Boss14,Us} come with strong theoretical guarantees (convexity and convergence) but scale unfavorably with respect to the state-space dimension. Therefore, dimension reduction techniques, such as our approach, are needed to scale these methods to high dimensions. Further, the outer approximations provided by these methods satisfy the conditions of Theorem \ref{thm:GeneralizedDecouplingProcedure}, and therefore the decoupling procedure is directly applicable while retaining convergence guarantees. In addition, we propose a further sparsification of the method that incorporates sparsity directly into the semidefinite programs. This leads to different outer approximations at a similar computation time at the cost of convergence guarantees. However, we show that the convergence can be restored by combining the two approaches.




\subsection{Linear program representations for the ROA, MPI and GA}\label{sec:LPs}

We start by recalling the method from~\cite{Boss14} for computing the  MPI set $M_+$ based on infinite-dimensional linear programming (LP) and refer to \cite{BossRoA,Us} for similar results for the ROA and the GA. We will pair this approach with subsystem decompositions in subsections \ref{subsec:MainThm} and \ref{SubSec:SparseImprovement}.

In~\cite{Boss14}, the following LP was proposed
\begin{align}\label{LP:MPIDual}
	\begin{tabular}{lcll}
		$d^* :=$ & $\inf\limits_{v,w}$ & $\int\limits_{X} w(x) \; d\lambda(x)$ &\\
		& s.t. & $v \in \Cc^1(\R^{n}), w \in \Cc(X)$ &\\
			 & & $ \nabla v \cdot f \leq v $ & on $X$\\
			 & & $w \geq 0$& on $X$\\
			 & & $w \geq v + 1$ & on $X$\\
	\end{tabular}
\end{align}

\begin{remark}[Interpretation of~(\ref{LP:MPIDual})]\label{rem:InterpretLPMPI}
An important property of the LP (\ref{LP:MPIDual}) is that for any feasible $(v,w)$ it holds
\begin{equation*}
    w^{-1}([1,\infty)) \supset M_+.    
\end{equation*}
This can be verified as follows~\cite{Boss14}: The constraint $\nabla v \cdot f\leq v$ implies $v(x) \geq 0$ on $M_+$, see \cite[Lemma 4]{Boss14}, and hence, by the last constraint in (\ref{LP:MPIDual}), $w(x) \geq v(x) +1 \geq 1$ on $M_+$. Further, in \cite{Boss14}, it was shown that the outer approximation $w^{-1}([1,\infty))$ of the MPI set gets tight; more precisely, any minimizing sequence $(w_m,v_m)$ of the LP (\ref{LP:MPIDual}) satisfies $w_m \rightarrow \chi_{M_+}$ pointwise almost everywhere as $m\rightarrow \infty$ where $\chi_{M_+}$ denotes the indicator function of $M_+$ and it holds
$$
d^* = \int\limits_X \chi_{M_+} (x) \; dx = \lambda(M_+)
$$
the volume of the MPI set $M_+$.
\end{remark}




\subsection{Semidefinite programs for the ROA, MPI set and GA} \label{Subsec:SDP}
In this section, we recall the approach for reducing the LP (\ref{LP:MPIDual}) to a hierarchy of semidefinite programs (SDPs) from \cite{Boss14}; for the ROA and the GA, we refer to~\cite{BossRoA,Us}. For this approach, it is necessary to assume algebraic structure of the problem. For this standard procedure, we refer to~\cite{LasserreMoments} or~\cite{LasserreBook} for details.

\begin{assumption} \label{Assumption:Semialgebraic} The vector field $f$ is polynomial and $X \subset \R^n$ is a compact basic semi-algebraic set, that is, there exist polynomials $p_1,\ldots,p_{i},q_1,\ldots,q_l \in \R[x]$ such that $X = \{x \in \R^{n}: p_j(x) \geq 0 \text{ for } j = 1,\ldots,i\}$. Further, we assume (without loss of generality) that one of the $p_j$ is of the form $R^2 - \|x\|_2^2$ for some  $R >0$.
\end{assumption}


We briefly outline the idea for the SDP tightening of the LP (\ref{LP:MPIDual}) and refer to \cite{henrionBook,Boss14,BossRoA,Us} for details.

\paragraph{The SOS hierarchy for~(\ref{LP:MPIDual})} Formulating the SOS hierarchy for the optimization problem~(\ref{LP:MPIDual}) consists of the following steps:
\begin{enumerate}
    \item Replacing the search space in~(\ref{LP:MPIDual}) by the space of polynomials. This is justified by the Stone-Weierstraß theorem and the existence of strictly feasible points.
    \item Truncating the degree of the polynomials: Bounding the total degree of the polynomials induces a sequence of finite dimensional optimization problems (in the coefficients of the polynomials)
    \item Tightening each non-negativity constraint to a sum-of-squares condition. This allows to represent the resulting optimization problem as an SDP~\cite{LasserreBook}
    \item Finally, convergence is guaranteed by Putinar's positivstellensatz~\cite{Putinar}.
\end{enumerate}
Recalling that for $k\in \N$ we denote by $\R[x]_k$ the space of polynomials of degree at most $k$, the above procedure induces the following SOS-hierarchy for~(\ref{LP:MPIDual}):

Let $d_f := \max\{\deg f_1,\ldots,\deg f_n\}$ be the maximal degree of the polynomial map $f = (f_1,\ldots,f_n)\in \R[x]^n$. For $k \in \N$, where $k$ is the maximal total degree of the occurring polynomials, we formulate the following SOS program
\begin{align}\label{SDP:MPIDual}
	\begin{tabular}{lcll}
		$d_k^* :=$ & $\inf\limits_{v,w}$ & $\int\limits_X w(x) \; d\lambda(x)$\\
		& s.t. & $v \in \R[x]_{k+1-d_f}, w \in  \R[x]_k$ &\\
			& & $ v -\nabla v \cdot f = s_1 + \sum\limits_{j = 1}^{i} a_j p_j$ &\\
			& & $w = s_2 + \sum\limits_{j = 1}^i b_j p_l$&\\
			& & $w - v - 1 = s_3 + \sum\limits_{j = 1}^i c_j p_j$ &
	\end{tabular}
\end{align}
with sum-of-squares polynomials $s_1,s_2,s_3,a_j,b_j,c_j \in \R[x]$, for $j = 1,\ldots,i$, such that all occurring polynomials in (\ref{SDP:MPIDual}) are of degree at most $k$. The cost term $\displaystyle\int\limits_X w(x) \; d\lambda(x)$ is linear in $w$ and can be evaluated (linearly) via the coefficients of $w$ if the moments $m_\alpha := \int\limits_X x^\alpha \; dx$ are known. More precisely, for $w \in \R[x]_k$ with $w = \sum\limits_{|\alpha|\leq k} w_\alpha x^\alpha$ we have
\begin{equation*}
    \int\limits_X w(x) \; d\lambda(x) = \sum\limits_{|\alpha|\leq k} w_\alpha m_\alpha.
\end{equation*}

By~\cite{Boss14}, the optimal values $d_k^*$ of (\ref{SDP:MPIDual}) converge monotonically from above to the Lebesgue measure of the MPI set. For optimal solutions $(v_k,w_k)$ of (\ref{SDP:MPIDual}), the sets \begin{equation}\label{eq:Superlevelset}
    S^{(k)}:= w_k^{-1}([1,\infty)] = \{x \in X: w_k(x) \geq 1\}
\end{equation}
are outer approximations that get tight (with respect to Lebesgue measure discrepancy) as $k \rightarrow \infty$.

\subsection{Applying Algorithm~\ref{Alg:DecouplingProcedure}}\label{subsec:MainThm}

Combining Theorem~\ref{thm:GeneralizedDecouplingProcedure} with the convergence properties for the SOS-hierarchy for~(\ref{LP:MPIDual}) from~\cite{Boss14} we get the following corollary.

\begin{corollary}\label{cor:SDPSparse}
    Algorithm \ref{Alg:DecouplingProcedure}, where in step 2 we use the SDP hierarchy (\ref{SDP:MPIDual}), produces converging outer approximations of the MPI set, i.e. $S^{(k)} \supset S$ for all $k \in \N$ and
\begin{equation*}
    d_\lambda(S^{(k)},S) = \lambda(S^{(k)} \Delta S) \rightarrow 0 \text{ as } k\rightarrow \infty
\end{equation*}
    where $S$ denotes the MPI set for the whole system. Using Algorithm \ref{Alg:SubsysDecomposition} in Algorithm \ref{Alg:DecouplingProcedure} for selecting a subsystem decomposition, the complexity of the corresponding SDPs (\ref{SDP:MPIDual}) is determined by $\omega$ from Theorem \ref{thm:mainthmformal}.
\end{corollary}

\begin{proof}
    This follows immediately from the convergence results of~\cite{Boss14} and Theorem \ref{thm:GeneralizedDecouplingProcedure}. The complexity statement follows because the largest occurring SDP, i.e. the SDP involving the most variables, is induced by the subsystems containing the most states. Its complexity is determined by the number of states in the subsystem. By Theorem \ref{thm:mainthmformal}, that number is given by $\omega$.
\end{proof}

\begin{remark}
    In Corollary \ref{cor:SDPSparse}, there is a dependence on an underlying decomposition of $X$ according to a family $\mathcal{J}$. We suggest to use $\mathcal{J}$ from Lemma \ref{lem:MinimalFactorization}.
\end{remark}

The integer $\omega$ from Theorem \ref{thm:mainthmformal} is the largest dimension of appearing subsystems. The SDPs obtained by the SOS hierarchy grow combinatorially in the number of variables and the degree bound $k$. For an explicit illustration of the effectiveness of exploiting sparsity let $x_{I_1},\ldots,x_{I_m}$ be a subsystem decomposition for which $X$ decomposes accordingly via $X_1,\ldots,X_m$. Let $n_j := |I_j|$ for $1\leq j \leq m$ and $\nu_j$ be the number of constraints defining the set $X_j$. Let $\nu$ be the total number of constraints defining $X$.
The number of variables in the non-sparse SDP for the full system and the sparse SDPs are of the order
\begin{equation}\label{eq:numbvar}
	(5+ 3\nu)\binom{n + \frac{k}{2}}{\frac{k}{2}} \quad \text{and} \quad \sum\limits_{j = 1}^m (5 + 3\nu_j)\binom{n_j + \frac{k}{2}}{\frac{k}{2}}.
\end{equation}
The reduction is significant if there is strong separation in the dynamics as well as in the constraint set $X$, i.e. $n_j$ is small compared to $n$, and $\nu_j$ is small compared to $\nu$. This is expected because strong separation tells that fewer interactions are needed to describe the system.

\begin{remark}
    We only present the sparse computation of the MPI set based on~(\ref{LP:MPIDual}). The treatment of the ROA and GA is analogous.
\end{remark}

\subsection{Sparse improvement}\label{SubSec:SparseImprovement}
We propose a slightly adapted LP that allows further improvement, leading to tighter outer approximations while maintaining the reduced computational complexity. Again, we restrict to the MPI set but the ROA and GA are treated analogously. For the rest of this section assume that $x_{I_1},\ldots,x_{I_m}$ is a subsystem decomposition and $X$ decomposes according to $I_1,\ldots,I_m$ via sets $X_j \subset \R^{|I_j|}$ for $j = 1,\ldots,m$.

We propose the following sparse version of the LP (\ref{LP:MPIDual}), acting only on functions defined on the variables $x_{I_j}$ for $j = 1,\ldots,m$

\begin{align}\label{LP:MPIDualSparse}
	\begin{tabular}{lcl}
		$s^{*} :=$ & $\inf\limits_{w_j,v_j}$ & $\displaystyle\int\limits_X \sum\limits_{j = 1}^m w_{j}(y) \; d\lambda(y)$\\
		& s.t. & $v_{j},w_j \in \Cc^{1}(\R^{|I_j|}), \; 1\leq j \leq m$\\
                 & & $(w,v)$ is feasible for (\ref{LP:MPIDual}) with\\
                 & & $w(x):= \sum\limits_{j= 1}^m w_j(x_{I_j})\;\;$ and\\
                 & & $v(x):= \sum\limits_{j= 1}^m v_j(x_{I_j})$.
	\end{tabular}
\end{align}

For the corresponding SDP tightening of (\ref{LP:MPIDualSparse}), we choose the SOS multiplier to only depend on the variables $x_{I_l}$.

\begin{remark} For a feasible point $(w_1,v_1,w_2,v_2,\ldots,w_m,v_m)$ of (\ref{LP:MPIDualSparse}) the pairs $(w_j,v_j)$ do not need to be feasible points for the LP (\ref{LP:MPIDual}) for the subsystem $x_{I_j}$. Hence (\ref{LP:MPIDualSparse}) enforces less structure compared to simultaneously solving the LP (\ref{LP:MPIDual}) for each of the subsystems. On the other hand, the LP (\ref{LP:MPIDualSparse}) is a tightening of the LP (\ref{LP:MPIDual}) because it restricts to a sparse class of feasible points. This can potentially hamper convergence of the approximations. By intersecting with the approximations coming from the fully decoupled approach we avoid this undesirable property; this is formally stated in Theorem~\ref{thm:BossFix}.
\end{remark}

\begin{theorem}\label{thm:BossFix}
    For $k \in \N$ let $S^{(k)}$ be the outer approximation of the MPI set $M_+$ from Algorithm \ref{Alg:DecouplingProcedure} using the SDP hierarchy from Section \ref{Subsec:SDP}, and let $(w^{(k)}_j,v^{(k)}_j)$, $j = 1,\ldots,m$ be optimal points for the sparse SDPs for~(\ref{LP:MPIDualSparse}). Set
    \begin{equation}\label{eq:BossFixROA}
        Y^{(k)}:= \{x \in X: \sum\limits_{j = 1}^m w^{(k)}_j(\Pi_{I_j}(x)) \geq 1\}.
    \end{equation}    
    Then $S^{(k)} \cap Y^{(k)}$ is a converging (with respect to $d_\lambda$) outer approximation of $M_+$. As in Corollary~\ref{cor:SDPSparse}, the largest occurring SOS multiplier acts on $\max\limits_{1\leq j \leq m} |I_j|$ many variables.
\end{theorem}

\begin{proof}
     Since the LP (\ref{LP:MPIDualSparse}) is a tightening of (\ref{LP:MPIDual}), by Remark \ref{rem:InterpretLPMPI}, it also holds $Y^{(k)} \supset M_+$ and hence $S^{(k)} \supset S^{(k)} \cap Y^{(k)} \supset M_+$. Convergence follows from the convergence of $S^{(k)}$ stated in Corollary \ref{cor:SDPSparse}. By the enforced sparse structure of the SDPs for the sparse LP (\ref{LP:MPIDualSparse}) the largest SOS multiplier corresponds to the subsystem of the largest dimension; that subsystem has $\omega = \max\limits_{1\leq j \leq m} |I_j|$ variables.
\end{proof}

\section{Numerical examples}

\subsection{Cherry structure}
We call a sparsity graph as in Figure~\ref{fig:vpCherry} a cherry structure. Those structures are the most sparse structures for our framework. They occur for instance in Dubins car and the 6D Acrobatic Quadrotor \cite{Chen}. Here we treat a larger example of a cherry structure. We consider the interconnection of Van der Pol oscillators as in Figure~\ref{fig:vpCherry}.
\begin{figure}[!t]
\begin{center}
\includegraphics[scale=0.5]{./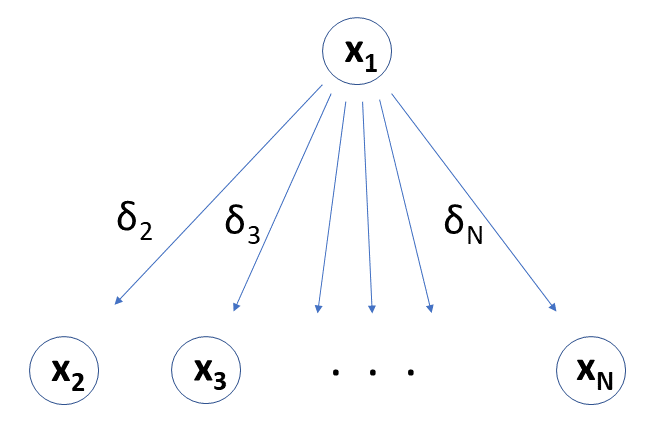}
	\caption{Interconnection of Van-Der Pol oscillators in a cherry structure.}
	\label{fig:vpCherry}
\end{center}
\end{figure}
For the leaf nodes $x^2,\ldots,x^N$, the dynamics is
\begin{align*}
	\dot{x}^i_1 & =  2x^i_2\\
	\dot{x}^i_2 & =  -0.8x^i_1 - 10 [ (x^i_1)^2-0.21]x^i_2 + \delta_i x_1^1.
\end{align*}
For the root note $x^1$, the dynamics is 
\begin{align*}
	\dot{x}^1_1 & =  2x^1_2\\
	\dot{x}^1_2 & =  -0.8x^1_1 - 10 [ (x^1_1)^2-0.21 ]x^1_2.
\end{align*}

We illustrate the decoupling procedure by computing outer approximations of the MPI set of this system with respect to the constraint set $[-1.2,1.2]^{2N}$. We carry out the computation for degree $k = 8$ and $N = 10$, resulting in a total dimension of the state-space equal to 20. The optimal decoupling in this case is into subsystems $(x^1,x^i)$, $i=2,\ldots,N$, each of dimension four. Figure~\ref{fig:VanderPol} shows the sections of the MPI set outer approximations when the value at the root node is fixed at $[0.5,-0.1]$. The computation time was 12 seconds.\footnote{All computations were carried out using YALMIP \cite{Lofberg} and MOSEK running on Matlab and 4.2 GHz Intel Core i7, 32 GB 2400MHz DDR4.} Next we carried out the computation with $k=8$ and $N = 26$, resulting in state-space dimension of 52. Figure \ref{fig:VanderPol52} shows the sections of the MPI set outer approximations when the value at the root node is fixed at $[0.5,-0.1]$. The total computation time was 40.3 seconds. It should be mentioned that these problems in dimensions 20 or 52 are currently intractable without structure exploitation. Here the sparse structure allowed for decoupling in $9$ respectively $25$ problems in 4 variables, which were solved in less than a minute in total.

\begin{figure}[!t]
\begin{center}
\includegraphics[scale=0.24]{./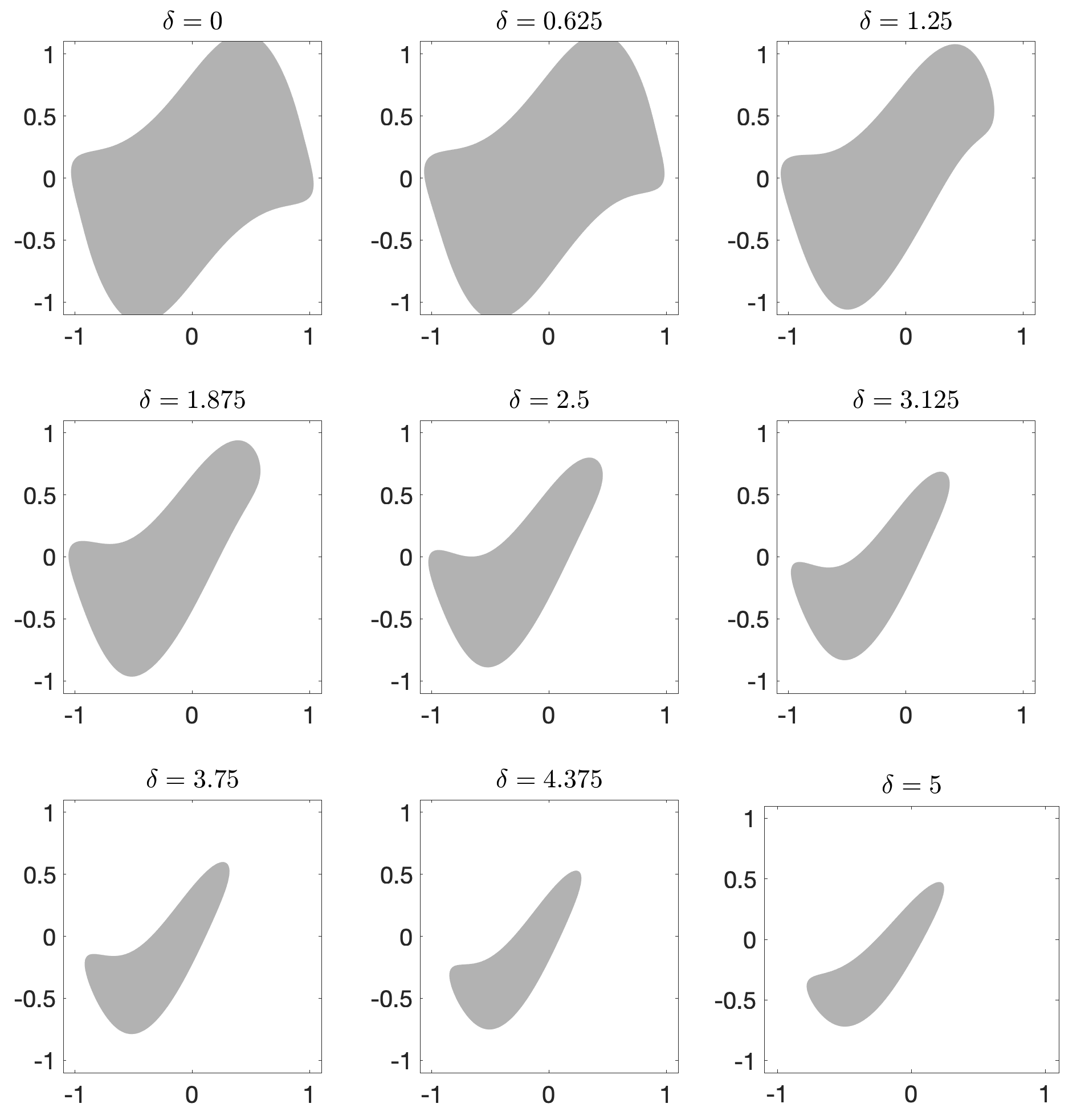}
\caption{\footnotesize{Van der Pol oscillators in a cherry structure: The figure shows the outer approximations of the MPI set for $k = 8$ and $N =10$ for the subsystems given by the cherry--branches.}}
\label{fig:VanderPol}
\end{center}
\end{figure}
\begin{figure}[!t]
\begin{center}
\includegraphics[width=0.9\columnwidth]{./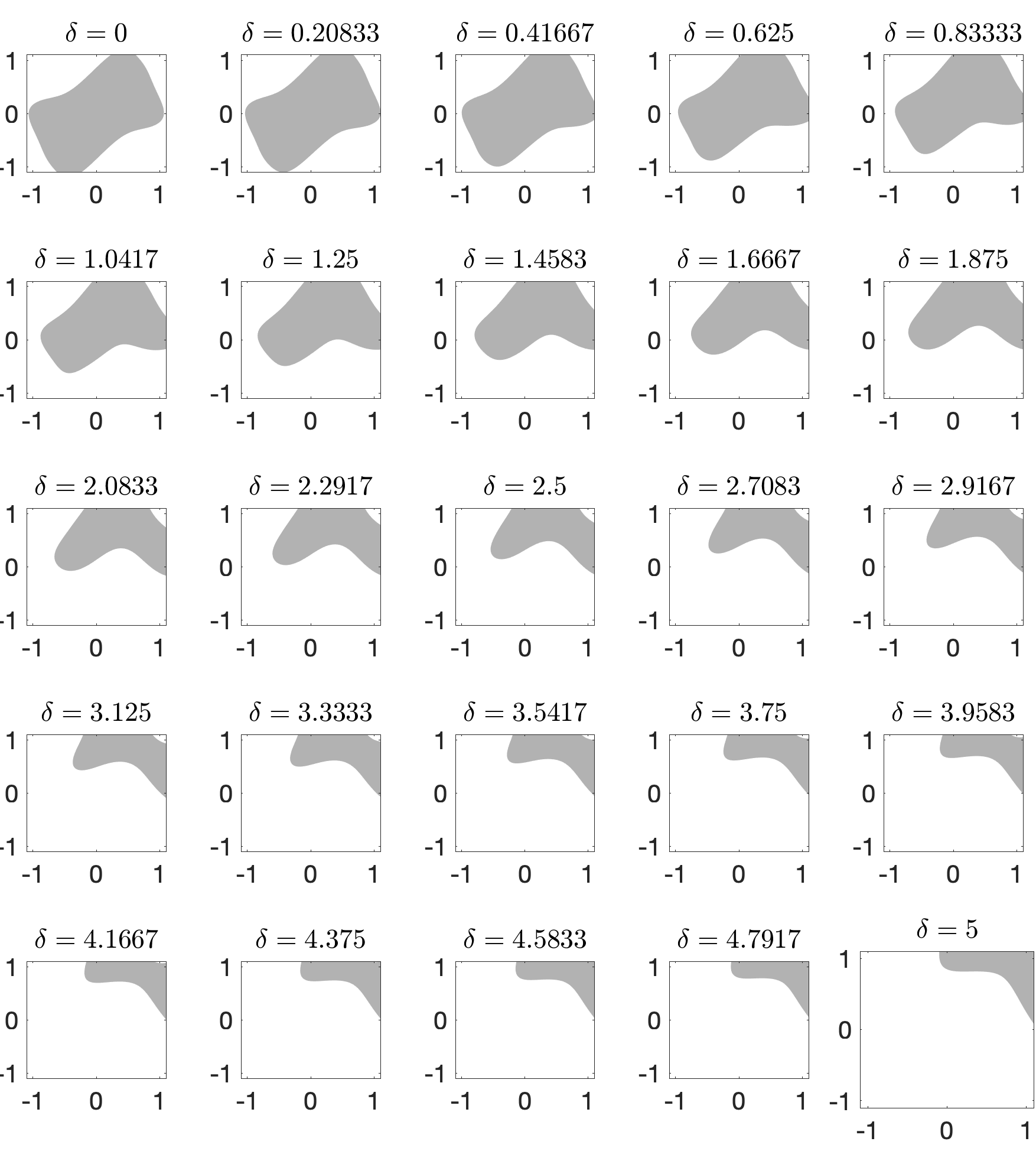}
\caption{\footnotesize{Van der Pol oscillators in a cherry structure: The figure shows the outer approximations of the MPI set for $k = 8$ and $N =26$ for the subsystems given by the cherry--branches}}
\label{fig:VanderPol52}
\end{center}
\end{figure}

\subsection{Tree structure}

We thank Edgar Fuentes for pointing out to us that radial distribution networks provide common examples of tree structures \cite{RadialDistributionNetworks}. Similarly, to the previous example we consider a network of Van der Pol oscillators; now interconnected as in Figure~\ref{fig:vp2}.

The coupling is as in the previous example from the first component of the predecessor state to the second component of the successor state. The coupling intensity $\delta$ is set to 0.1 for each edge. The goal is to compute the MPI set with respect to the constraint set $[-1.2,1.2]^{10}$. The optimal decoupling is now into 3 subsystems given by $(x^1,x^2,x^4)$, $(x^1,x^2,x^5)$, $(x^1,x^3)$; the respective dimensions are $6$, $6$ and $4$. Figure~\ref{fig:vp2RP6} shows six random sections of the ten-dimensional MPI set outer approximation computed by our approach with degree $k = 8$. Even though the overall state-space dimension 10 is less than it was in our previous example, the computation time of 285 seconds is higher since the maximum dimension of the subsystems is higher.

\begin{figure}[!t]
\begin{center}
\includegraphics[width=0.5\columnwidth]{./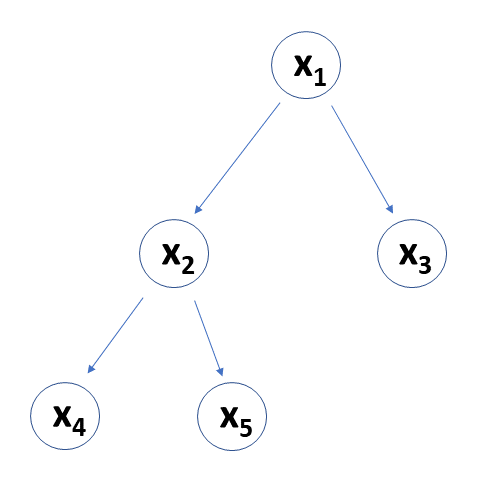}
	\caption{Interconnection of Van-Der Pol oscillators in a tree structure.}
	\label{fig:vp2}
\end{center}
\end{figure}

\begin{figure}[!t]
\begin{center}
\includegraphics[width=0.6\columnwidth]{./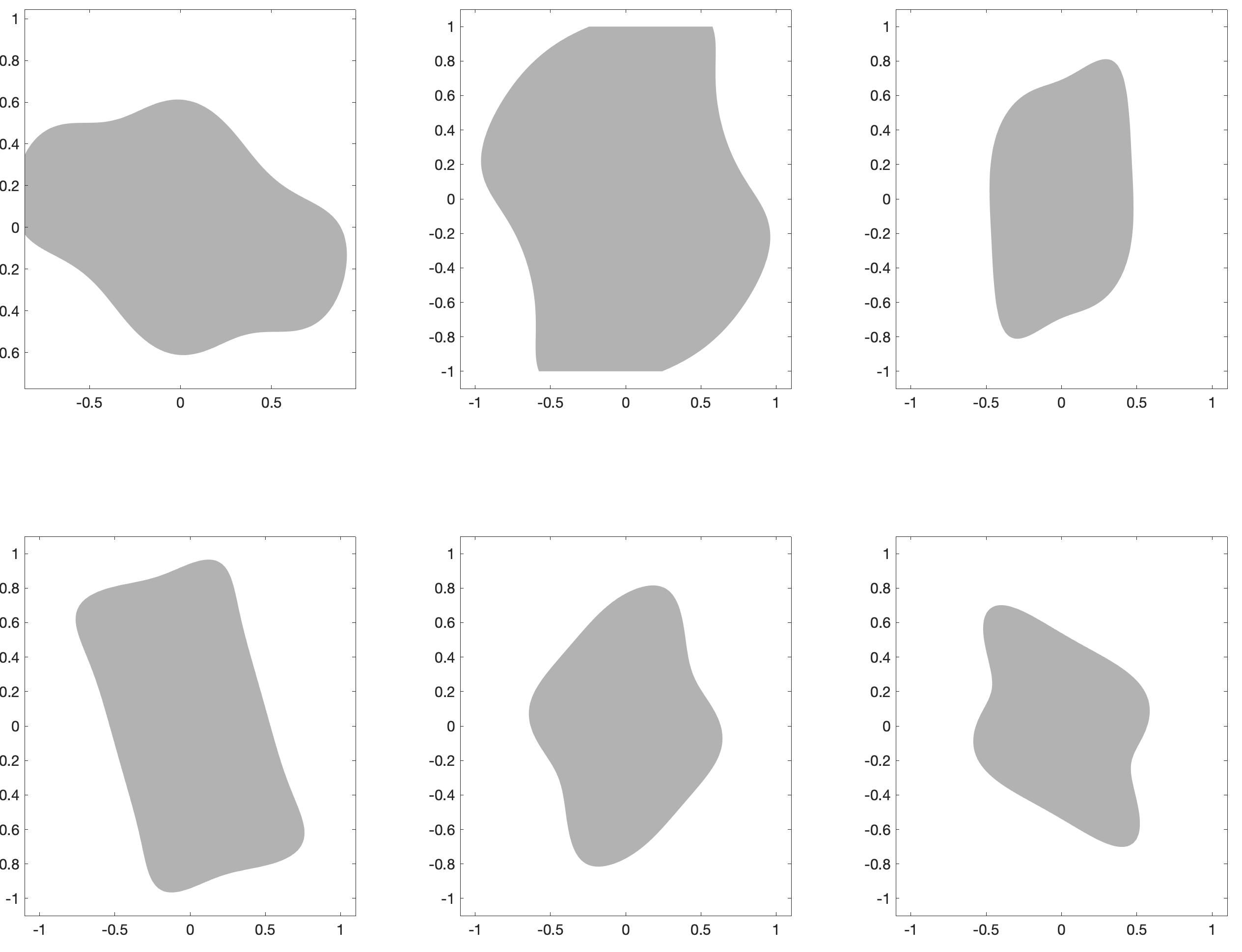}
	\caption{\footnotesize Van der Pol oscillators in a tree structure: Random projections of the outer approximation to the ten-dimensional MPI set.}
	\label{fig:vp2RP6}
\end{center}
\end{figure}

\begin{figure}[!t]
\begin{center}\label{Fig:Cascade}
\includegraphics[scale=0.3]{./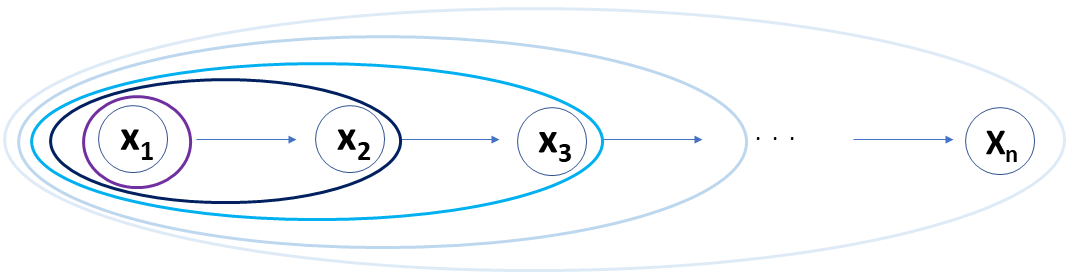}
\caption{\footnotesize{Cascade without branching. Circles around states indicate subsystems, i.e. all the subsystems are induced by index sets of the form $I = \{1,\ldots,k\}$ for $1\leq k \leq n$.}}
\label{FigCycleSystem}
\end{center}
\end{figure}

\begin{remark}\label{rem:CascadeDecomposition}
   We like to mention the following observation for systems where our approach does not obtain an overall reduction in the dimension. As an example, let us consider a cascade system as in Figure~\ref{Fig:Cascade}, i.e. $\dot{x}_1 = f(x_1)$ and $\dot{x}_k = f_k(x_1,\ldots,x_k)$ for $k = 2,\ldots,n$. A subsystem decomposition is given by $I_1 := \{1\}, I_2 = \{1,2\},\ldots,I_n := \{1,\ldots,n\}$. Even though $I_n$ corresponds to the full system we benefit from lower dimensional subsystems $x_{I_k}$ via Theorem \ref{thm:DecouplingRoAMPIAttractorGeneral}. Additionally, in the SOS approach, for smaller $1\leq k\leq n$ higher degree bounds can be used. This induces higher accuracy in the approximation of the set of interest for the subsystems, and, by Theorem \ref{thm:GeneralizedDecouplingProcedure}, improves the overall approximation obtained from Algorithm \ref{Alg:DecouplingProcedure}. More generally, a subsystem decomposition allows taking specific properties and structures of the subsystems into account and therefore can simplify the analysis of the whole system.
\end{remark}

\section{Conclusion}
\color{black}
We present a decomposition of nonlinear dynamical systems into what we call subsystems. We show that several important sets for dynamical systems decompose with respect to their correspondences for the subsystems. Our work was motivated by~\cite{Chen} and extended from the region of attraction to the maximum positively invariant set as well as to the global attractor. For these sets, we utilize the decomposition to propose computational methods that operate solely on the subsystems and can be integrated with existing methods for computing or approximating those sets. As an illustrative example, we pair the decomposition with an existing convex optimization approach to approximate the region of attraction, the maximum positively invariant set, and the global attractor. We show that this induces a converging sequence of outer approximations that exploits the sparse structure of the dynamical system and reduces computational cost significantly when the system is sparse.

We believe that decomposing the dynamical system into subsystems as presented here can be beneficial for other objectives such as invariant manifolds or the constructions of Lyapunov functions. It may also be of interest to exploit sparsity for extreme value computation, building on~\cite{Goluskin}. Another direction of future work is the inclusion of control, i.e., the computation of the region of attraction with control, the maximum controlled invariant set, and optimal control. Utilizing this approach in a data-driven setting, building on~\cite{korda2020}, is another possible generalization.

Sparsity in the dependence of the dynamics of the states is not the only structure of $f$ that can be exploited. For example, in \cite{JiePaper}, the algebraic structure of polynomial dynamics $f$ has been investigated from the perspective of term-sparsity. In addition, more general sparse structures can and should be investigated as we have seen that our approach treats straight paths or cycles as subsystems -- in the same way as if all the corresponding nodes were fully connected. Work in this direction was done in~\cite{Matteo}.

Additional reduction techniques, such as symmetry, can be combined with our approach. Decoupling into subsystems maintains symmetry structures (for the subsystems), so merging our approach with for example the symmetry argument in \cite{FantuzziGoluskin} can be done by first decoupling into subsystems and then exploiting symmetries of the subsystems (independently).

Our notion of subsystems is coordinate-dependent. A coordinate-free approach is desirable but challenging from a computational perspective. We aim to investigate a coordinate-free formulation in future work.

\appendix
\section{Appendix:}

\begin{proof}\textit{ Of Lemma \ref{lem:MinimalFactorization}.}
	We look at the set
	\begin{eqnarray*}
	    T:= \{J\subset \{1,\ldots,n\} & : & J \text{ and } \{1,\ldots,n\}\setminus  J \text{ induce } \\
	    & & \text{a factorization of } X\}.
	\end{eqnarray*}
	The set $T$ is the collection of all partitions consisting of only two sets, such that they induce a factorization of $X$. We will see that $T$ contains minimal elements (with respect to inclusion); these will give rise to the desired factorization of $X$. We start with the following properties of $T$.
	\begin{enumerate}
		\item $T$ is non-empty.\\
		$J = \{1,\ldots,n\}$ is contained in $X$ because it induces the trivial factorization $X$ of factoring into itself.
		\item $T$ is closed with respect to taking the complement in $\{1,\ldots,n\}$.\\
		Let $J \in T$ then $J^c := \{1,\ldots,n\} \setminus J \in T$ because the pair $J^c,J$ is a partition that induces the same factorization as the pair $J,J^c$.
		\item $T$ is closed with respect to intersections.\\
		Let $J_1,J_2 \in T$ with corresponding sets $X_1 := \Pi_{J_1}(X)$, $X_2 :=\Pi_{J_1^c}(X)$ and $Y_1 := \Pi_{J_2}(X) , Y_2 :=\Pi_{J_2^c}(X)$. Let $J:= J_1 \cap J_2$ and $I:= \{1,\ldots,n\} \setminus J$. We claim $J,I$ induces a factorization. Therefore let $Z_1:= \Pi_J(X)$ and $Z_2 := \Pi_I(X)$. We need to show that we have
		\begin{equation}\label{eq:FactorizationClaim}
			X = X':=  \{x \in \R^n : \Pi_J(x) \in Z_1, \Pi_I(x) \in Z_2\}.
		\end{equation}
		For any $x \in X$ we have $x \in X'$ by definition of $Z_1$ and $Z_2$. To shoe $X' \subset X$, let $x' \in X'$. By definition of $Z_1$ there exists $x_1 \in X$ with $\Pi_J(x_1) = \Pi_J(x')$. The idea is to modify $x_1$ to $x'$ only by operations that maintain membership to $X$. We will do so using that $J_1$ and $J_2$ induce factorizations. From $J_1 \in T$ it follows $\Pi_{J_1}(x_1) \in X_1$. Since $J_1,J_1^c$ induces a factorization the element $x_2 \in \R^n$ with $\Pi_{J_1}(x_2) = \Pi_{J_1}(x_1)$ and $\Pi_{J_1^c}(x_2) = \Pi_{J_1^c}(\Pi_I(x'))$ belongs to $X$. If we repeat this process with $J_1$ replaced by $J_2$ we find an element $x_3 \in X$ such that $\Pi_J(x_3) = \Pi_{J_1 \cap J_2} (x_3) = \Pi_{J_1 \cap J_2}(x')$ and $\Pi_I(x_3) = \Pi_{J_1^c \cup J_2^c} (x_3) = \Pi_{J_1^c \cup J_2^c}(x')$, i.e. $x' = x_3 \in X$.
		\item $T$ is closed with respect to taking unions.\\
		Let $J_1,J_2 \in T$. Then $J_1 \cup J_2 = (J_1^c \cap J_2^c)^c \in T$.
	\end{enumerate}
	It follows that $T$ is a (finite) topology and hence there exists a minimal basis of $T$ (consisting of the smallest neighbourhoods of each point), i.e. for each $i \in \{1,\ldots,n\}$ define $U_i := \bigcap\limits_{J \in T: i \in J} J \in T$. Those $U_i$ are minimal elements in $T$ containing $i$, and hence their unions covers $\{1,\ldots,n\}$. Further for $i \neq k$ the sets $U_i$ and $U_k$ are either identical or disjoint, otherwise intersecting them would create smaller non-empty elements in $T$. Because some of the sets $U_i$ might coincide and we want to work with partitions we have to remove redundant sets among $U_i$. Let $J_1,\ldots,J_N$ be the partition consisting of sets $U_i$, i.e. $\{J_1,\ldots,J_N\} = \{U_1,\ldots,U_n\}$. Note that $J_i \cap J_l = \emptyset$ for all $i \neq l$ and $\bigcup\limits_{l = 1}^N J_l = \bigcup\limits_{i = 1}^n U_i = \{1,\ldots,n\}$, i.e. $J_1,\ldots,J_N$ is a partition of $\{1,\ldots,n\}$. We claim that this defines the finest partition that factorizes $X$. We show first that $J_1,\ldots,J_N$ induces a factorization. For each $1\leq l \leq N$ there exist sets $X_l$ (and $X_l'$) such that
	\begin{equation}
	    X = \{x \in \R^n: \Pi_{J_l}(x) \in X_l, \Pi_{J_l^c}(x) \in X_l'\}.
	\end{equation} 
	We claim $X = \{x \in \R^n: \Pi_{J_l}(x) \in X_l \text{ for } l = 1,\ldots,N\}$. It suffices to show that $\{x \in \R^n: \Pi_{J_l}(x) \in X_l \text{ for } l = 1,\ldots,N\} \subset X$. Therefore let $x \in \R^n$ such that $\Pi_{J_l}(x) \in X_l$.
	Because $J_2 \in T$ it follows from $\Pi_{J_2}(x) \in X_2$  that there exists a $x^{(2)} \in X$ with $\Pi_{J_2}(x^{(2)}) = \Pi_{J_2}(x)$. Hence it follows $\Pi_{J_1^c}(x^{(2)}) \in X'_1$. In particular the element
	\begin{equation}
	    \tilde{x}^{(2)} = (\tilde{x}^{(2)}_i)_{i = 1,\ldots,n} \text{ with } \tilde{x}^{(2)}_i =  \begin{cases} x_i, & i \in J_1\\
	                  x^{(2)}_i, & i \in J_1^c 
	    \end{cases}
	\end{equation}
	belongs to $X$ and satisfies $\tilde{x}^{(2)}_i = x_i$ for $i \in J_1 \cup J_2$. Now we can continue this process for the new partition $(J_1 \cup J_2),J_3,\ldots,J_N$ and find an element $\tilde{x}^{(3)} \in X$ with $\tilde{x}^{(2)}_i = x_i$ for $i \in J_1 \cup J_2 \cup J_3$. Continuing until we have reached $J_N$ we find that finally $x = \tilde{x}^{(N)} \in X$. It remains to show that $J_1,\ldots,J_N$ is minimal. Therefore, let $1\leq k \leq M$. Then $I_k,I_k^c$ induces a factorization because $I_1,\ldots,I_M$ already induces a factorization, because $I_k^c = \bigcup\limits_{r \neq k} I_r$. That means $I_k \in T$ and since the $U_i$ build a basis we have $I_k = \bigcup\limits_{i \in I_k} U_i = \bigcup\limits_{J_l \subset I_k} J_l$, what remained to be shown.
\end{proof}

\end{document}